\documentclass{article}
\usepackage{amsmath, amsthm, amsfonts, bm, bbm, caption, diagbox, tabularx, tabularray, tikz-cd, hyperref}
\usepackage[margin=1in]{geometry}

\makeatletter
\def\smalloverbrace#1{\mathop{\vbox{\m@th\ialign{##\crcr\noalign{\kern3\p@}%
\tiny\downbracefill\crcr\noalign{\kern3\p@\nointerlineskip}%
$\hfil\displaystyle{#1}\hfil$\crcr}}}\limits}
\makeatother

\newcommand\tikzmark[2]{%
\tikz[remember picture,baseline] \node[inner sep=2pt,outer sep=0] (#1){#2};%
}

\newcommand\link[2]{%
\begin{tikzpicture}[remember picture, overlay, >=stealth, shift={(0,0)}]
\draw[-{Implies},double, thick] (#1) to (#2);
\end{tikzpicture}%
}


\newcommand{\ffrac}[2]{\ensuremath{\frac{\displaystyle #1}{\displaystyle #2}}}

\newcommand{\lb}{\left[}           
\newcommand{\rb}{\right]}          
\newcommand{\ev}[1]{( #1 )} 
\newcommand{\ew}[1]{\left( #1 \right)} 
\newcommand{\Mod}[1]{\ (\mathrm{mod}\ #1)}

\newcommand{\Z}{\mathbb{Z}}
\newcommand{\C}{\mathbb{C}}

\newcommand{\R}{\mathbb{R}}

\newtheorem{theorem}{Theorem}[section]
\newtheorem{lemma}[theorem]{Lemma}
\newtheorem{corollary}[theorem]{Corollary}
\newtheorem{remark}{Remark}
\newtheorem{conj}[theorem]{Conjecture}
\numberwithin{equation}{section}
\theoremstyle{definition}
\newtheorem{defn}{Definition}


\title{Polynomial Reconstruction Problem for Hypergraphs}
\author{Joshua Cooper, Utku Okur}
\date{\today}

\begin{document}
\maketitle

\begin{abstract}
We show that, in general, the characteristic polynomial of a hypergraph is not determined by its ``polynomial deck'', the multiset of characteristic polynomials of its vertex-deleted subgraphs, thus settling the ``polynomial reconstruction problem'' for hypergraphs in the negative.  The proof proceeds by showing that a construction due to Kocay of an infinite family of pairs of $3$-uniform hypergraphs which are non-isomorphic but share the same hypergraph deck, in fact, have different characteristic polynomials.  The question remain unresolved for ordinary graphs.
\end{abstract}

\section{Introduction}
The graph reconstruction conjecture, which remains open today, was first stated in \cite[p.29]{ulam} as a metric space problem. In graph theoretical terms, the \textit{vertex deck} of a graph $G = (V(G),E(G))$ is the multiset of isomorphism classes of vertex-deleted induced subgraphs $G-u$, for each $u\in V\ev{G}$. We may restate the reconstruction conjecture as follows:
\begin{conj}[Kelly, 1957; Ulam, 1960]
A graph $G$ with at least three vertices is uniquely determined, up to isomorphism, from its vertex deck. 
\end{conj}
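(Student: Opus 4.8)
\noindent\emph{Proof strategy.} This is the Reconstruction Conjecture, one of the most durable open problems in graph theory; what follows is therefore not a proof but a sketch of the machinery one brings to bear and the precise point at which it stalls. (For $n=2$ the statement is genuinely false: $K_2$ and its complement share the deck $\{K_1,K_1\}$, which is why ``at least three vertices'' appears.)

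The first step is \emph{Kelly's Lemma}: for any graph $F$ with $|V(F)| < |V(G)|$, the number $s(F,G)$ of subgraphs of $G$ isomorphic to $F$ is determined by the deck, because each such copy survives in exactly $|V(G)|-|V(F)|$ of the cards and so
\[
  \big(|V(G)| - |V(F)|\big)\, s(F,G) \;=\; \sum_{u \in V(G)} s(F, G-u),
\]
whose right-hand side depends only on the multiset $\{G-u : u \in V(G)\}$. By inclusion--exclusion this also reconstructs the number of \emph{induced} copies of every such $F$; in particular the order, the size, the degree sequence, the number of components and their orders, and all ``small-configuration'' statistics are reconstructible. Kocay's Lemma then upgrades this to the reconstructibility of the number of ways to cover $G$ by a prescribed list of subgraphs each on fewer than $|V(G)|$ vertices.

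The second step is a reduction to the connected case: a disconnected graph is reconstructible --- one recognizes disconnectedness from the cards, reads off the component orders, and rebuilds $G$ from the card obtained by deleting a vertex of a smallest component --- so it suffices to reconstruct a connected $G$ from a deck whose cards we may take to come from a connected graph. For several structured families this last task can then be completed by bespoke arguments: trees (Kelly's original proof, via the center and the ``limbs''), regular graphs (each card is $G$ minus a vertex of known degree, so $G$ is read off directly), separable graphs without endvertices, unicyclic graphs, maximal planar graphs, and graphs with sufficient symmetry.

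The main obstacle --- and the reason the conjecture remains open --- is the general connected case. Kelly's and Kocay's Lemmas hand us the exact value of $s(F,G)$ for \emph{every} $F$ on fewer than $|V(G)|$ vertices, an enormous trove of isomorphism-invariant data; yet there is no known argument that two non-isomorphic graphs on $n$ vertices must disagree in some such count. Turning ``all proper-subgraph statistics'' into ``the isomorphism type of $G$'' is exactly the hard part, and it has resisted every general attack. It is, moreover, precisely the analogue of this gap that the present paper exploits in the hypergraph setting: Kocay's non-isomorphic, same-deck $3$-uniform hypergraphs agree on all deck-derived invariants, yet --- as we show --- they fail to agree on the characteristic polynomial, so that deck-derived data cannot, in that setting, pin down even this single natural invariant.
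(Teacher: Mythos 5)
This statement is the Kelly--Ulam Graph Reconstruction Conjecture, which the paper explicitly presents as an open problem and for which it offers no proof, so there is nothing to compare your attempt against. You correctly recognize that no proof exists, and your survey of the standard partial machinery (Kelly's Lemma, the counting identity $(|V(G)|-|V(F)|)\,s(F,G)=\sum_{u}s(F,G-u)$, the reduction to the connected case, and the known reconstructible families) is accurate and consistent with how the paper treats the conjecture.
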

Kelly has shown in \cite{kelly} that the reconstruction conjecture holds for trees. Recently, triangle-free graphs with some restrictions on the connectivity and the diameter are shown to be reconstructible from their vertex deck. (\cite{clifton}) Other reconstruction problems have been proposed: 
\begin{itemize}
\item Reconstruction of a graph from its vertex deck without multiplicity
\item Reconstruction from edge-deleted subgraphs
\end{itemize}
In \cite{schwenk} and \cite{cvetkovic}, the \textit{polynomial reconstruction problem} was suggested: Can we reconstruct the characteristic polynomial $\chi_G\ev{t}$ of a graph, from its \textit{polynomial deck}, i.e., the deck of characteristic polynomials of the vertex-deleted subgraphs? Some recent work on this and a discussion of the problem's history can be found in \cite{fa21,SciSta23}. Using the Harary-Sachs Theorem, Tutte showed in \cite[p.30]{tutte} that the characteristic polynomial of an ordinary graph is reconstructible from its vertex deck. Also, in \cite[p. 2]{SciSta23}, Sciriha and Stani\'c showed that there are non-isomorphic graphs with the same polynomial deck. 
In Figure \ref{fig1}, we present the table of four spectral reconstruction problems for ordinary graphs, from Schwenk's \textit{Spectral Reconstruction Problems} (\cite[p. 3]{schwenk}).

\begin{figure}[h]
\centering
\begin{tabularx}{96.55mm}{|c||c|c|}\hline 
\diagbox[width=4cm, height = 1.5cm]{Reconstruct}{Given}  & Deck of $G_i$ & Deck of $\chi_{G_i}\left(t\right)$'s  \tabularnewline[0.5cm]
\hline \hline \rule{0mm}{0.3cm} 
\raisebox{-0.2cm}{$G$} 	 & \tikzmark{a11}{ GRP (Open) }  & \tikzmark{a12}{ No (\cite{SciSta23}) } \tabularnewline[0.5cm]
\hline \rule{0mm}{0.5cm}
$\chi_G(t)$ & \tikzmark{a21}{ Yes (\cite{tutte})}   &   \tikzmark{a22}{ PRP (Open)  }  \tabularnewline[0.5cm]
\hline
\end{tabularx}
\link{a12}{a11}
\link{a12}{a22}
\link{a11}{a21}
\link{a22}{a21}

\caption{\label{fig1}Implications between reconstruction problems for ordinary graphs, where GRP stands for Graph Reconstruction Problem and PRP stands for Polynomial Reconstruction Problem.}
\end{figure}

For hypergraphs of rank $m\geq 3$, we have counterparts of the reconstruction conjectures. In particular, we have the polynomial reconstruction problem for hypergraphs.  Kocay showed in \cite{kocay} that hypergraphs of rank $3$ are not reconstructible from their decks. This is demonstrated by an infinite family of pairs of hypergraphs $\ev{X^n,Y^n}_{n\geq 3}$ such that their vertex-decks are the same, although $X^n$ and $Y^n$ are non-isomorphic. Our main theorem shows that the characteristic polynomials of these pairs of hypergraphs are different, established by showing that their principal eigenvalues are different. Since the polynomial decks of the pairs are the same, this disproves the polynomial reconstruction conjecture for hypergraphs of rank $3$. A novel aspect of our argument is that we treat hypergraphs throughout primarily as algebraic objects by analyzing their ``Lagrangians''. We suspect that similar methods can resolve the case of higher ranks as well, but have not attempted to work out the details.

\section{Preliminaries}
Throughout, we use the notation $N = 2^n$, where $n\geq 3$ in an integer. The set $\{1,2,\ldots,n\}$ is denoted as $[n]$, for any $n\geq 1$. We use $\mathbbm{1}_r$ to denote the all-ones vector of length $r\geq 1$. The subscript is omitted if it is clear from the context.  A {\em hypergraph} $\mathcal{H}$ is a pair $(V,E)$ of vertices $V(\mathcal{H}) := V$ and edges $E(\mathcal{H}) := E \subseteq 2^V$; if $|e|=m$ for all $e \in E$, then $\mathcal{H}$ is said to be {\em uniform of rank $m$}.

\begin{defn}
Given hypergraphs $\mathcal{H} = \ev{ [n], E\ev{ \mathcal{H} }}$ and $\mathcal{G} = \ev{ [n], E\ev{ \mathcal{G} }}$ of rank $m\geq 2$, then $\mathcal{H}$ and $\mathcal{G}$ are called \textit{hypomorphic}, provided there exists a permutation $\eta : [n] \rightarrow [n]$ so that $\mathcal{H} - i $ and $\mathcal{G} - \eta(i)$ are isomorphic, for each $i = 1,\ldots,n$.
\end{defn}
We have the following theorem by \cite{kocay}:
\begin{theorem}
There exists an infinite family of pairs of hypergraphs $\{\ev{X^n,Y^n}\}_{n\geq 3}$, of rank $3$, that are hypomorphic, but not isomorphic. 
\end{theorem}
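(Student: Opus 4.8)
The plan is to recall Kocay's explicit construction from \cite{kocay} and verify the two asserted properties directly. In that construction the vertex set common to $X^n$ and $Y^n$ has size $N = 2^n$ and is identified with the power set $2^{[n]}$ (equivalently, with the vertices of the Boolean $n$-cube); the edge set of each is a family of $3$-element subsets $\{A,B,C\} \subseteq 2^{[n]}$ satisfying a prescribed incidence condition, and $X^n$ and $Y^n$ are designed to coincide on all edges except for a tightly controlled set that is ``localized'' around a distinguished antipodal pair of vertices such as $\emptyset$ and $[n]$. The hypothesis $n \geq 3$ ensures the vertex set is large enough that the local repairs used below have room to operate.

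First I would prove hypomorphism. For a single fixed permutation $\eta$ of $2^{[n]}$ --- one may take $\eta$ to be the identity, or the complementation involution $A \mapsto [n] \setminus A$, depending on the symmetry of the construction --- I must exhibit, for every vertex $A$, an isomorphism $X^n - A \to Y^n - \eta(A)$. The crucial feature of the construction is that the symmetric difference of the edge sets of $X^n$ and $Y^n$ is arranged so that after deleting any one vertex the leftover discrepancy can be absorbed by a permutation of the remaining $N-1$ vertices; so for each deleted vertex I would write down an explicit relabeling of the surviving vertices, then check it is a bijection carrying incidences of $X^n - A$ onto incidences of $Y^n - \eta(A)$. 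This is bookkeeping, but it must be organized uniformly over all $N$ choices of deleted vertex and over all $n$.

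Next, non-isomorphism. Since $X^n$ and $Y^n$ are hypomorphic, every parameter reconstructible from the deck --- the number of edges, the whole degree sequence, and the count of every sub-hypergraph spanning fewer than $N$ vertices --- automatically agrees on the two, so any distinguishing invariant must be genuinely global, i.e.\ sensitive to spanning configurations. Following Kocay, I would count copies of a suitable spanning sub-hypergraph $H$: his subgraph-counting lemma expresses the number of spanning copies of $H$ in a hypergraph in terms of deck data plus one correction term, and the construction is rigged so that this correction term differs for $X^n$ and $Y^n$; hence $X^n \not\cong Y^n$. As alternatives one could compute $|\mathrm{Aut}(X^n)|$ versus $|\mathrm{Aut}(Y^n)|$, or, anticipating the later sections of this paper, invoke the fact that the characteristic polynomials differ --- but the elementary spanning-subgraph count is the cleanest route and is the one Kocay uses.

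The main obstacle is the non-isomorphism step: precisely because the construction makes all ``local'' information identical, none of the usual easy invariants can help, and one is forced to pin down the exact global quantity on which $X^n$ and $Y^n$ differ and evaluate it for every $n \geq 3$, which requires a good enough grip on the combinatorics of the chosen incidence condition to count spanning sub-configurations exactly. A secondary difficulty is keeping the $N$-fold family of isomorphisms in the hypomorphism step sufficiently uniform that they are manifestly correct for all $n$ at once rather than verified only in small cases.
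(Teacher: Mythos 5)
There is a genuine gap here, and also a mismatch with what the paper actually does. First, the paper itself offers no proof of this theorem: it is imported wholesale from \cite{kocay}, with the construction of $X^n$ and $Y^n$ reproduced (Definitions \ref{def:two_cycles}--\ref{def:final_defn}) but the hypomorphism and non-isomorphism claims simply cited. Your proposal is therefore being measured against Kocay's argument, and as written it is a plan rather than a proof: both halves defer exactly the steps that carry the mathematical content. For hypomorphism you say the symmetric difference of the edge sets ``is arranged so that'' the discrepancy can be absorbed by a relabeling and that writing down the relabelings ``is bookkeeping''; for non-isomorphism you say the construction ``is rigged so that'' Kocay's correction term differs. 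In both places you are asserting the conclusion of the step rather than carrying it out, and no specific invariant is ever identified or evaluated. Note also that one of your proposed alternatives is ruled out by the paper itself: $|\mathrm{Aut}(X^n)| = |\mathrm{Aut}(Y^n)| = 2$ (both groups are $\{\mathrm{id},\theta_n\}$), so the order of the automorphism group cannot distinguish the two hypergraphs.

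Second, the construction you describe is not the one in the paper. The common vertex set is not $2^{[n]}$ with Boolean-lattice structure; it is $\{0\}\cup V_n$ with $V_n=\{1,\dots,2^n\}$ carrying the cyclic group structure modulo $2^n$ (so $2^n+1$ vertices, not $2^n$). The shared part $\Gamma_n$ is built from tight cycles $C^3$, $D^3$ via the doubling maps $p_0,p_1$ and the expansion maps $\mathrm{E}^n_r$, and the \emph{only} place $X^n$ and $Y^n$ differ is the gadget at the apex vertex $0$: $M^n_0 = x_0\,\mathrm{E}_2(x_1x_2+x_3x_4)$ versus $M^n_1 = x_0\,\mathrm{E}_2(x_1x_4+x_2x_3)$, i.e.\ a reshuffling of edges through $0$ among the residue classes of $1,2,3,4$ modulo $4$. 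There is no distinguished antipodal pair playing the role of $\emptyset$ and $[n]$; the relevant symmetry is the reflection $\theta_n$ of the cycle $1,\dots,2^n$ fixing $0$. Since the localization of the discrepancy and the group structure on the vertices are exactly what make both the hypomorphism maps and the distinguishing global count tractable, getting the construction right is not a cosmetic issue: the argument you outline cannot be completed from the setup you describe.
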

We describe $X^n$ and $Y^n$ below in Definition \ref{def:final_defn}. Hypomorphic graphs share the same number of edges and the same degree sequences, among many other common properties. We will show that the characteristic polynomials of $X^n$ and $Y^n$ are different, for each $n \geq 3$.

\begin{defn}[Adjacency Hypermatrix]
Let $\mathcal{H} = \ev{ [n], E\ev{\mathcal{H}}}$ be a hypergraph, with $n$ vertices and rank $m\geq 2$. The (normalized) \textit{adjacency hypermatrix} $\mathcal{A}_\mathcal{H}$ of $\mathcal{H}$ is the rank $m$ and dimension $n$ symmetric hypermatrix with entries
$$ a_{i_1 \ldots i_m} = \begin{cases}  \ffrac{ 1 }{(m-1)!} & \text{ if } \{i_1,\ldots,i_m\} \in E\ev{\mathcal{H}} \\ 0 & \text{otherwise}
\end{cases}$$
\end{defn}

\begin{defn}
\label{def:lagrangian}
Given a hypergraph $\mathcal{H} = \ev{ [n], E\ev{\mathcal{H}}}$, with $n$ vertices and rank $m$, with adjacency hypermatrix $\mathcal{A}_\mathcal{H} = \ev{a_{i_1 \ldots i_m}}$, then we have the following multivariable function defined on $\R^n$, called the \textit{Lagrangian (polynomial)} of $\mathcal{H}$:
$$ F_\mathcal{H}\ev{\mathbf{x}} = \ev{1/m} \cdot \sum_{i_1 \ldots i_m = 1 }^{n} a_{i_1 \ldots i_m} x_{i_1} x_{i_2} \cdots x_{i_m}$$ 
For a given edge $e = \{i_1,\ldots, i_m\}\in E\ev{\mathcal{H}}$, let $\mathbf{x}^{e} = x_{i_1} x_{i_2} \cdots x_{i_m}$. With this notation, we have
$$ F_\mathcal{H}\ev{\mathbf{x}} = \sum_{e\in E\ev{ \mathcal{H} }} \textbf{x}^{e}$$ 
We will occasionally simplify the notation by writing $\mathcal{H}\ev{\mathbf{x}}$ for the Lagrangian $F_\mathcal{H}\ev{\mathbf{x}}$; in fact, we often suppress the vector of variables and use $\mathcal{H}$ to denote the Lagrangian as well as the hypergraph.
\end{defn}

\begin{defn}[Eigenpairs of tensors]
Given a hypergraph $\mathcal{H} = \ev{ [n], E\ev{\mathcal{H}}}$, of dimension $n$ and rank $m$, then a vector $\mathbf{x}\in \C^{n}$ is an \textit{eigenvector} of $\mathcal{H}$ corresponding to the \textit{eigenvalue} $\lambda \in \C$, provided the equation
$$ \sum_{i_2  \ldots i_m = 1 }^{n} a_{j i_2 \ldots i_m} x_{i_2} \cdots x_{i_m} = \lambda x_j^{m-1}$$
is satisfied, for each $j=1,\ldots,n$.
\end{defn}
Since the Lagrangian $F_\mathcal{H}$ is homogeneous by Definition \ref{def:lagrangian}, we typically consider only the restriction of $F_\mathcal{H}$ to the set $S$ of non-negative unit vectors (with respect to the $\ell_m$-norm):
$$S = \{ \ev{x_1,\ldots, x_n} \in \mathbb{R}_{\geq 0}^n : |x_1|^{m} +\ldots +|x_n|^{m} = 1 \}$$
Let $\textup{Int}\ev{S}$ be the relative interior of $S$, that is, the set of unit vectors with strictly positive coordinates. 
\begin{remark}
\label{rmk:past_theorems}
\phantom{a}

\begin{enumerate}
\item In \cite{cd1} (and, shortly after, in \cite{friedland} in greater generality), it was shown that, if $\mathcal{H}$ is connected, then there is a unique eigenvalue $\lambda$, associated with a unit eigenvector $\mathbf{v}\in \textup{Int}\ev{S}$ with strictly positive coordinates, analogously to the classical Perron-Frobenius Theorem. This eigenvalue, which is always real and strictly positive, is called the \textit{principal eigenvalue} of $\mathcal{H}$ or the \textit{spectral radius} of $\mathcal{H}$. The corresponding eigenvector $\mathbf{v}$ is the \textit{principal eigenvector} of $\mathcal{H}$.
\item As shown in \cite{cd1}, the function $F_\mathcal{H}\ev{\mathbf{x}}$ is maximized on $\text{Int}\ev{S}$ \textit{uniquely} at its principal eigenvector $\mathbf{v}$ and the maximum value attained is the principal eigenvalue $\lambda$ of $\mathcal{H}$. In other words,
$$ \max_{\mathbf{x} \in \textup{Int}\ev{S} } F_\mathcal{H}\ev{\mathbf{x}} =  F_\mathcal{H}\ev{\mathbf{v}} = \lambda $$ 
\end{enumerate}
\end{remark}
Next, we have a lemma about the relationship between the automorphisms of a hypergraph $\mathcal{H}$ and its principal eigenvector. 
\begin{defn}
Let $\sigma \in \text{Sym}\ev{[n]} $ be a permutation of $[n]$.
\begin{enumerate}
\item For a hypergraph $\mathcal{H} = \ev{ [n], E(\mathcal{H})}$, we write $\sigma(\mathcal{H})$ for the hypergraph whose vertices are $[n]$ and whose edges are $\{\sigma(e) : e \in E(\mathcal{H})\}$, where $\sigma(e) = \{\sigma(t) : t \in e\}$. 
\item We also apply the mapping $\sigma$ on the coordinates of a vector $\mathbf{x}$ via
$$ 
\sigma\ev{ x_1,\ldots,x_n} = \ev{x_{\sigma\ev{1}}, \ldots, x_{\sigma\ev{n}}  }
$$
\end{enumerate}

\end{defn}
Note that 
$$ F_{\sigma\ev{\mathcal{H}}}\ev{ \mathbf{x}} = F_\mathcal{H} \ev{ \sigma\ev{\mathbf{x}}}  $$
\begin{lemma}
\label{lem:automorphism_eigenvect}
Let $\mathcal{H} = \ev{[n],E\ev{ \mathcal{H} } }$ be a hypergraph, with principal eigenpair $\ev{ \lambda, \mathbf{v} }$. Then, 
$$ \sigma\ev{\mathbf{v}} = \mathbf{v} $$ 
for any automorphism $\sigma \in \textup{Aut}\ev{ \mathcal{H}}$ of $\mathcal{H}$.
\end{lemma}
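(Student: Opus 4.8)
The plan is to exploit the variational characterization of the principal eigenvector recorded in Remark \ref{rmk:past_theorems}(2): among all vectors in $\textup{Int}\ev{S}$, the Lagrangian $F_\mathcal{H}$ attains its maximum \emph{uniquely} at $\mathbf{v}$, with maximum value $\lambda$. So it suffices to show that $\sigma\ev{\mathbf{v}}$ is also a maximizer of $F_\mathcal{H}$ on $\textup{Int}\ev{S}$, and then invoke uniqueness.

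First I would observe that, since $\sigma \in \textup{Aut}\ev{\mathcal{H}}$, we have $\sigma\ev{\mathcal{H}} = \mathcal{H}$ as hypergraphs (same vertex set $[n]$, same edge set, as $\{\sigma(e) : e \in E(\mathcal{H})\} = E(\mathcal{H})$). Combining this with the identity $F_{\sigma\ev{\mathcal{H}}}\ev{\mathbf{x}} = F_\mathcal{H}\ev{\sigma\ev{\mathbf{x}}}$ noted just before the lemma, we get $F_\mathcal{H}\ev{\sigma\ev{\mathbf{x}}} = F_\mathcal{H}\ev{\mathbf{x}}$ for every $\mathbf{x} \in \R^n$; that is, $F_\mathcal{H}$ is invariant under the coordinate permutation induced by $\sigma$.

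Next I would check that $\sigma$ maps $\textup{Int}\ev{S}$ bijectively onto itself: permuting coordinates preserves both non-negativity (indeed strict positivity) of all coordinates and the value $|x_1|^m + \cdots + |x_n|^m$, so $\mathbf{x} \in \textup{Int}\ev{S}$ implies $\sigma\ev{\mathbf{x}} \in \textup{Int}\ev{S}$, and $\sigma^{-1}$ gives the inverse. In particular $\sigma\ev{\mathbf{v}} \in \textup{Int}\ev{S}$. Then
$$ F_\mathcal{H}\ev{\sigma\ev{\mathbf{v}}} = F_\mathcal{H}\ev{\mathbf{v}} = \lambda = \max_{\mathbf{x}\in \textup{Int}\ev{S}} F_\mathcal{H}\ev{\mathbf{x}}, $$
so $\sigma\ev{\mathbf{v}}$ is a maximizer of $F_\mathcal{H}$ over $\textup{Int}\ev{S}$. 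By the uniqueness of the maximizer in Remark \ref{rmk:past_theorems}(2), $\sigma\ev{\mathbf{v}} = \mathbf{v}$, as claimed.

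There is essentially no hard step here; the only things to be careful about are (a) spelling out that an automorphism fixes the edge set and hence the Lagrangian as a function, and (b) noting that $\sigma$ preserves $\textup{Int}\ev{S}$ so that $\sigma\ev{\mathbf{v}}$ is an admissible competitor. The uniqueness clause from the Perron--Frobenius-type result of \cite{cd1} then does all the work. (One could alternatively argue directly from the eigenvalue equation that $\ev{\lambda,\sigma\ev{\mathbf{v}}}$ is again a positive eigenpair and appeal to uniqueness of the positive eigenvector, but the Lagrangian/maximization route is cleaner and is the viewpoint emphasized in this paper.)
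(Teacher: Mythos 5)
Your proposal is correct and follows essentially the same route as the paper: both use the identity $F_\mathcal{H}\ev{\mathbf{x}} = F_{\sigma\ev{\mathcal{H}}}\ev{\mathbf{x}} = F_\mathcal{H}\ev{\sigma\ev{\mathbf{x}}}$ to conclude that $\sigma\ev{\mathbf{v}}$ attains the maximum value $\lambda$, and then invoke the uniqueness of the maximizer from Remark \ref{rmk:past_theorems}(2). Your additional check that $\sigma$ preserves $\textup{Int}\ev{S}$ is a detail the paper leaves implicit, but the argument is the same.
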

\begin{proof}
For each $\sigma \in \textup{Aut}\ev{ \mathcal{H}}$, we have
$$ F_\mathcal{H} \ev{ \mathbf{x} } = F_{\sigma\ev{\mathcal{H}}}\ev{ \mathbf{x}} $$
Evaluating at the principal eigenvector, we obtain,
$$ 
\lambda  = F_\mathcal{H} \ev{ \mathbf{v} } = F_{\sigma\ev{\mathcal{H}}}\ev{ \mathbf{v}} = F_\mathcal{H} \ev{ \sigma\ev{\mathbf{v}}} 
$$
By the uniqueness of the maximum point of $F_\mathcal{H} $ (Remark \ref{rmk:past_theorems}, part 2), it follows that $ \sigma\ev{\mathbf{v}} = \mathbf{v}$.
\end{proof}

Now, we consider the infinite family of pairs of hypergraphs $\{\ev{X^n,Y^n}\}_{n\geq 3}$ constructed in \cite{kocay}.

\begin{defn}
For $n\geq 0$, let $V_n = \{1,2,3,4,\ldots, 2^n\}$. Given $x,y\in V_n$, we define 
$$x+y \pmod{V_n}  = z$$
where $z$ is the unique integer $z\in V_n$ such that $x+ y \equiv z \pmod{2^n}$. Throughout, the superscript $n$ in $\mathcal{H}^{n}$ for each hypergraph $\mathcal{H}$ will indicate that calculations are conducted $\Mod{V_n}$.
\end{defn}

\begin{defn}
For each $n\geq 4$, we define the maps $p_0^{n},p_1^{n} : V_{n-1}\rightarrow V_n$, by 
\begin{align*}
&p_0^{n} \ev{ i }  = 2i \pmod {V_n}  \\
&p_1^{n}\ev{  i }  = 2i-1 \pmod {V_n}
\end{align*}
for $i\geq 1$. 

We use the same notation for the polynomial map $p_j^{n}\ev{ x_i } = x_{ p_j^{n} \ev{ i }} $, for $j=0,1$. The maps $p_0^{n},p_1^{n}$ extend to a $\C$-algebra endomorphism of $\C[x_0,x_1,\ldots,x_N]$. We omit the superscript if it is clear from context.

For $r\geq 1$ and for a given zero-one vector $\bm{\epsilon} = [\epsilon_0,\ldots,\epsilon_{r-1}] \in \{0,1\}^{r}$, we let 
$$
p_{\bm{\epsilon}} = p_{\epsilon_0} \circ \ldots \circ p_{\epsilon_{r-1} }
$$
\end{defn}

\begin{remark}
\label{rmk:repeated_app}
It follows, by induction, that 
$$ p_{\bm{\epsilon}}\ev{ i } = 2^{r} i - \sum_{j = 0}^{ r-1 } \epsilon_j 2^{j} \qquad \text{ for each $\bm{\epsilon}\in \{0,1\}^{r}$ }$$ 
\end{remark}

\begin{defn}
Define the permutation $\tau \in \textup{Sym}([8])$ by 
$$ \tau \ev{ i }  = 3i \pmod { V_3 }$$
In other words, $\tau 
=\scalebox{1.4}{$\binom{12345678}{36147258}$}$.
\end{defn}

\begin{defn}
\label{def:two_cycles}
Define the tight cycle,
$$
C^{3}\ev{ \mathbf{x} } = \sum_{1\leq i \leq 8} x_{i-1} x_{i} x_{i+1}
$$
We use $\tau$ to obtain another tight cycle: 
$$
D^{3} \ev{ \mathbf{x} } = C^{3} \ev{ \tau\ev{ \mathbf{x} } } 
$$
\end{defn} 
\begin{remark}
Note that
\begin{align*}
& D^{3} \ev{ \mathbf{x} } = \sum_{1\leq i \leq 8} x_{3i-3} x_{3i} x_{3i+3} \\ 
& = \sum_{1\leq j \leq 8} x_{j-3} x_{j} x_{j+3}  & \text{ by change of variables, $j = 3i \Mod{V_3}$, $1\leq i \leq 8$ } 
\end{align*}
\end{remark}

\begin{defn}
For $n\geq 3$ and $2\leq r\leq n$, we define a map $\textup{E}^{n}_r$:
\begin{enumerate}
\item For $r=n$, we define the identity map:
$$\textup{E}^{n}_n \ev{ x_i } = x_i $$
\item For $2 \leq r \leq n-1$, we define
$$\textup{E}^{n}_r  \ev{ x_i }  = \sum_{ 1 \leq s \leq 2^{n-r} } x_{ i + s \cdot 2^r  } = \sum_{\substack{ 1\leq j \leq 2^n  \\ j \equiv i \Mod{2^r} } } x_j $$
\end{enumerate}
The map $\textup{E}^{n}_r$ extends to a $\C$-algebra endomorphism of $\C[x_0,x_1,\ldots,x_N]$. We omit the superscript if it is clear from the context. 
\end{defn}

\begin{remark}
\label{rmk:sigma_exceeds_threshold}
\phantom{a} 
$ \textup{E}^{n}_r  \ev{ x_i } = \text{E}^{n}_r  \ev{ x_{i+2^t} }$ for any $t\geq r$.
\end{remark}

\begin{defn}
\label{def:cycles}
Let $k\geq 2$ and $n\geq 3$. We define a family of hypergraphs $G_{k}^n$ as follows:
\begin{enumerate} 
\item[(1)] If $k=n$, we define :

For $n=3$, 
$$
G_{3}^3 \ev{ \mathbf{x} } = C^{3} \ev{ \mathbf{x} } + D^{3} \ev{ \mathbf{x} } =  \sum_{1\leq i \leq 8} x_{i-1} x_{i} x_{i+1} + \sum_{1\leq i \leq 8} x_{i-3} x_{i} x_{i+3}
$$

For $n\geq 4$, 
$$
G_{n}^n \ev{\mathbf{x}} = \textup{E}_3^{n} \ev{ G_3^{3} \ev{ \mathbf{x} } }
$$

\item[(2)] If $2\leq k <n$, we define:

For $n=3$, 
$$
G_{2}^3 \ev{ \mathbf{x} } = \sum_{1\leq i \leq 4} x_{i} x_{i+2} x_{i+4} 
$$

For $n\geq 4$, 
$$
G_{k}^n\ev{ \mathbf{x} } = p_{0}\ev{ G_{k}^{n-1} \ev{ \mathbf{x} }} + p_{1}\ev{ G_{k}^{n-1} \ev{ \mathbf{x} } } 
$$
\end{enumerate}
\end{defn}
The hypergraphs $G_{1,k}^{0}\ev{n}$ defined in \cite{kocay} are shortly denoted here as $G_k^{n}$.

%

\begin{defn}
\label{def:infinity_vertex_defn}
\begin{align*}
& M^n_0 \ev{ \mathbf{x} }= x_0 \textup{E}^{n}_2 \ev{  x_1 x_2 + x_3 x_4} \\
& M^n_1 \ev{ \mathbf{x} }=x_0 \textup{E}^{n}_2 \ev{  x_1 x_4 + x_2 x_3 }  
\end{align*}
The hypergraphs defined as $M^{0}_n\ev{ n }, M^{1}_n\ev{ n }$ in \cite{kocay} are denoted here as $M^n_0$ and $M^{n}_1$, respectively.
\end{defn}

\begin{defn}
\label{def:final_defn}
Let $n\geq 3$. 
Define 
\begin{enumerate}
\item[i)] $$T_n\ev{\mathbf{x}} = \sum_{k=2}^{n-1} G_{k}^{n}\ev{\mathbf{x}} $$
\item[ii)] $$\Gamma_n\ev{\mathbf{x}} = T_n\ev{\mathbf{x}} + G_n^{n}\ev{\mathbf{x}} = \sum_{k=2}^{n} G_{k}^{n}\ev{\mathbf{x}} $$
\item[iii)]
$$X^n \ev{\mathbf{x}}= \Gamma_n\ev{\mathbf{x}} + M^n_0 \ev{\mathbf{x}}$$
$$Y^n \ev{\mathbf{x}} = \Gamma_n \ev{\mathbf{x}} + M^n_1 \ev{\mathbf{x}} $$
\end{enumerate}
The hypergraphs $G_n$, $X_n$, $Y_n$ defined in \cite{kocay} are denoted here as $\Gamma_n$, $X^n$, $Y^n$, respectively. As shown in \cite{kocay}, the hypergraphs $X^n$ and $Y^n$, for $n\geq 3$, are hypomorphic, but not isomorphic. 

\end{defn}
Note that, by Definition \ref{def:infinity_vertex_defn}, the vertex $0$ has positive codegree with every other vertex (i.e., they are contained in an edge together), so $X^n$ and $Y^n$ are connected, and therefore the hypergraph Perron-Frobenius Theorem applies as per Remark \ref{rmk:past_theorems}.

The hypergraph $G_n^{n}$ was defined in Definition \ref{def:cycles} with an explicit formula. In \cite{kocay}, a recursive definition of $G_{1,n}^{0}\ev{n}$ is given. To show that these definitions are equivalent, we define below $H^{n}$, which is an alternative notation for $G_{1,n}^{0}\ev{n}$.

\begin{defn}
For each $n\geq 3$, we define the mapping 
$$ q^{n}\ev{ x_i } = x_i + x_{i+2^{n-1}} $$
We extend the mapping $q^{n}$ to a $\C$-algebra endomorphism of $\C[x_0,x_1,\ldots,x_N]$.
\end{defn}
We note that $q^{n} = \textup{E}_{n-1}^{n}$, for each $n\geq 3$. 
\begin{defn}
Let $n\geq 3$. We define the family $H^n$ as follows:

For $n=3$, 
$$
H^3 \ev{ \mathbf{x} } = G_{3}^3 \ev{ \mathbf{x} } = C^{3} \ev{ \mathbf{x} } + D^{3} \ev{ \mathbf{x} } 
$$

For $n\geq 4$, 
$$
H^n \ev{\mathbf{x}} = q^{n} \ev{ H^{n-1}\ev{ \mathbf{x} } }
$$
\end{defn}

\begin{lemma}
\label{lem:rec_defn_g_n}
We claim that 
$$
H^{n}  = \textup{E}^{n}_{3} \ev{ G_3^{3} } = G_{n}^{n}
$$
for each $n\geq 3$. In other words, the hypergraph $G_n^{n}$ defined in Definition \ref{def:cycles} is identical to that defined in \cite{kocay}.
\end{lemma}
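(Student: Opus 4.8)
The plan is to reduce the lemma to the single identity $H^n = \textup{E}^n_3(G_3^3)$, because the other equality $\textup{E}^n_3(G_3^3) = G_n^n$ holds essentially by fiat: for $n \geq 4$ this is precisely how $G_n^n$ was defined in Definition~\ref{def:cycles}, and for $n = 3$ the operator $\textup{E}^3_3$ is the identity, so $\textup{E}^3_3(G_3^3) = G_3^3$. I would then prove $H^n = \textup{E}^n_3(G_3^3)$ by induction on $n$, the base case $n = 3$ being immediate since $H^3 = G_3^3 = \textup{E}^3_3(G_3^3)$.

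For the inductive step, assume $H^{n-1} = \textup{E}^{n-1}_3(G_3^3)$ for some $n \geq 4$. The recursion $H^n = q^n(H^{n-1})$ together with the already-noted equality $q^n = \textup{E}^n_{n-1}$ gives $H^n = \textup{E}^n_{n-1}\bigl(\textup{E}^{n-1}_3(G_3^3)\bigr)$, so it suffices to prove the ``tower'' identity
$$ \textup{E}^n_{n-1}\circ \textup{E}^{n-1}_3 = \textup{E}^n_3 ; $$
in fact I expect the more general $\textup{E}^n_r \circ \textup{E}^r_k = \textup{E}^n_k$ for $2 \le k \le r \le n$ to follow by the same reasoning, but only this instance is needed. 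Since both sides are $\C$-algebra endomorphisms, it is enough to check the identity on each generator $x_i$.

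On generators I would argue with the ``residue-class'' description of these operators. We have $\textup{E}^{n-1}_3(x_i) = \sum_a x_a$, the sum running over $a \in \{1, \dots, 2^{n-1}\}$ with $a \equiv i \pmod{8}$, and applying $\textup{E}^n_{n-1}$ to each term yields $\textup{E}^n_{n-1}(x_a) = x_a + x_{a + 2^{n-1}}$ (using $x_{a + 2^n} = x_a$ modulo $V_n$). Because $n - 1 \ge 3$ we have $8 \mid 2^{n-1}$, hence $a + 2^{n-1} \equiv i \pmod{8}$ as well; and since $\{1,\dots,2^n\}$ is the disjoint union of $\{1,\dots,2^{n-1}\}$ and $\{2^{n-1}+1,\dots,2^n\}$, as $a$ ranges over the residue class of $i$ mod $8$ inside $\{1,\dots,2^{n-1}\}$ the indices $a$ and $a + 2^{n-1}$ together run, without repetition, over exactly the residue class of $i$ mod $8$ inside $\{1,\dots,2^n\}$. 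Therefore $\textup{E}^n_{n-1}\bigl(\textup{E}^{n-1}_3(x_i)\bigr) = \sum_{1 \le b \le 2^n,\; b \equiv i\,(8)} x_b = \textup{E}^n_3(x_i)$, which closes the induction.

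This is essentially a bookkeeping argument, so I do not anticipate a real obstacle; the only delicate point is the interplay of the two moduli — the inner operator $\textup{E}^{n-1}_3$ reduces indices mod $2^{n-1}$ while the outer $\textup{E}^n_{n-1}$ reduces mod $2^n$ — and one must verify that their composition neither repeats nor omits any index. That is precisely where the hypothesis $n \ge 4$ (equivalently $8 \mid 2^{n-1}$) is used.
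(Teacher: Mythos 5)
Your proposal is correct and matches the paper's proof: the paper likewise reduces the inductive step to the identity $q^{n}\circ\textup{E}^{n-1}_{3}=\textup{E}^{n}_{3}$ (with $q^{n}=\textup{E}^{n}_{n-1}$) and verifies it on the generators $x_i$ by the same pairing of indices $a$ and $a+2^{n-1}$ within the residue class of $i$ modulo $8$. The only cosmetic difference is that the paper reads the generator computation in the other direction, splitting $\textup{E}^{n}_{3}(x_i)$ into such pairs rather than assembling them.
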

\begin{proof}
First, we show that 
$$ q^{n} \textup{E}^{n-1}_{3}  = \textup{E}^{n}_{3}$$
as follows: 
\begin{align*}
& \textup{E}^{n}_{3} \ev{ x_i } = \sum_{ 1\leq s \leq 2^{n-3} } x_{ i + 8s } = \sum_{ 1\leq t \leq 2^{n-4} } x_{ i + 8t } + x_{ i + 8t + 2^{n-1} } \\  
& = \sum_{ 1\leq t \leq 2^{n-4} } q^{n} \ew{ x_{ i + 8t } } = q^{n} \ew{ \sum_{ 1\leq t \leq 2^{n-4} } x_{ i + 8t } } = q^{n} \ev{ \textup{E}^{n-1}_{3} \ev{ x_i }} 
\end{align*}
As the maps agree on the generators of $\C[x_0,x_1,\ldots,x_N]$, they are identical.

For the statement of the lemma, we use induction on $n$, to prove that 
$$  
H^{n} = \textup{E}^{n}_{3} \ev{ G_3^{3} }.
$$
The base case of $n=3$ is clear. For the induction step, 
\begin{align*}
& H^{n} = q^{n}\ev{ H^{n-1} } \\ 
& = q^{n} \textup{E}^{n-1}_{3} \ev{ G_3^{3} } && \text{ by the inductive hypothesis} \\ 
& = \textup{E}^{n}_{3} \ev{ G_3^{3} } && \text{ as shown above }
\end{align*}
\end{proof}

The recursive definitions of $\{ G_{k}^{n} \}_{k=2}^{n}$, $T_n$ and $\Gamma_n$, found in Definitions \ref{def:cycles} and \ref{def:final_defn} can be turned into explicit formulas, by induction:
\begin{remark}
\phantom{a}

\label{rmk:rec_defn}
\begin{enumerate}
\item $$G_{2}^{n} = \sum_{ \bm{\epsilon} \in \{0,1\}^{n-3} } p_{\bm{\epsilon}} \ev{ G_2^{3} } = \sum_{1\leq i \leq 4} \sum_{ \bm{\epsilon} \in \{0,1\}^{n-3} } p_{\bm{\epsilon}} \ev{  x_{i} x_{i+2} x_{i+4} } $$ 
\item For $3\leq k \leq n$, $$G_{k}^{n} = \sum_{ \bm{\epsilon} \in \{0,1\}^{n-k} } p_{\bm{\epsilon}} \ev{ G_k^{k} } \text{ where } G_k^{k}  = \textup{E}^{k}_3 \ev{ G_3^{3} }$$
\item $ \Gamma_n = T_n + G_n^{n}$, where
$$T_n = \sum_{1\leq i \leq 4} \sum_{ \bm{\epsilon} \in \{0,1\}^{n-3} } p_{\bm{\epsilon}} \ev{ x_{i} x_{i+2} x_{i+4} } + \sum_{1\leq i \leq 8} \sum_{1 \leq r\leq n-3 } \sum_{ \bm{\epsilon} \in \{0,1\}^{r} } p_{\bm{\epsilon}} \textup{E}_{n-r} \ev{ x_{i-1} x_{i} x_{i+1} + x_{i-3} x_{i} x_{i+3} }  $$
and 
$$ G_n^n = \textup{E}^{n}_3 \ev{ G_3^{3} } =  \textup{E}^{n}_{3} \lb \sum_{1\leq i \leq 8}  x_{i-1} x_{i} x_{i+1} + x_{i-3} x_{i} x_{i+3}  \rb $$
\end{enumerate}
\end{remark}
Note that each edge of $T_n$ consists of vertices of the same parity. 

\section{Main Theorem}
By \cite[Theorems 4.7 and 5.4]{kocay}, we know that $\text{Aut}\ev{X^n} = \text{Aut}\ev{Y^n} = \{ \textup{id}, \theta_n \}$, where $\theta_n$ fixes $0$ and reflects the cycle $1,\ldots,2^n$:
$$
\theta_n\ev{x} = 
\begin{cases} 2^n - x + 1 &\text{ if } 1\leq x \leq 2^n \\ 0 & \text{ if } x=0 \end{cases}
$$
Note that $\theta_n$ is an involution and $|\text{Aut}\ev{X^n}| = |\text{Aut}\ev{Y^n}| =2$. 

\begin{lemma}
\label{lem:basis_step}
Let $\mathbf{x} \in \C^{N+1}$ be a vector such that $\theta_n\ev{\mathbf{x}} = \mathbf{x}$. Then, we have
$$ Y^n \ev{ \mathbf{x} } - X^n \ev{ \mathbf{x} }  = x_0 \lb \textup{E}_2 \ev{ x_1 - x_3 } \rb^{2}  $$
\end{lemma}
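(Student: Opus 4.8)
The plan is to first cancel the common part of $X^n$ and $Y^n$, then reduce the difference to a single factored expression inside $\textup{E}_2$, and finally use $\theta_n$-invariance to identify the two resulting linear factors. By Definition~\ref{def:final_defn}, the polynomial $\Gamma_n$ occurs in both $X^n$ and $Y^n$, so, writing $\textup{E}_2 = \textup{E}^n_2$,
$$ Y^n\ev{\mathbf{x}} - X^n\ev{\mathbf{x}} = M^n_1\ev{\mathbf{x}} - M^n_0\ev{\mathbf{x}} = x_0\,\textup{E}_2\!\left(x_1 x_4 + x_2 x_3 - x_1 x_2 - x_3 x_4\right), $$
using that $\textup{E}_2$ is a $\C$-algebra endomorphism fixing $x_0$ (Definition~\ref{def:infinity_vertex_defn}). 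The quadratic in parentheses factors as $x_1 x_4 + x_2 x_3 - x_1 x_2 - x_3 x_4 = (x_1 - x_3)(x_4 - x_2)$, and since $\textup{E}_2$ is multiplicative this gives
$$ Y^n\ev{\mathbf{x}} - X^n\ev{\mathbf{x}} = x_0 \cdot \textup{E}_2\ev{x_1 - x_3} \cdot \textup{E}_2\ev{x_4 - x_2}. $$
Thus it suffices to prove that $\textup{E}_2\ev{x_4 - x_2}$ and $\textup{E}_2\ev{x_1 - x_3}$ evaluate to the same number at any vector $\mathbf{x}$ with $\theta_n\ev{\mathbf{x}} = \mathbf{x}$.

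For this I would expand $\textup{E}_2\ev{x_i} = \sum_{1 \le j \le 2^n,\ j \equiv i \Mod{4}} x_j$, so that at a vector $\mathbf{x}$
$$ \textup{E}_2\ev{x_1 - x_3} = \sum_{j \equiv 1} x_j - \sum_{j \equiv 3} x_j, \qquad \textup{E}_2\ev{x_4 - x_2} = \sum_{j \equiv 0} x_j - \sum_{j \equiv 2} x_j, $$
all sums being over $j \in \{1,\dots,2^n\}$ in the indicated residue classes modulo $4$. The key observation is that the reflection $j \mapsto 2^n - j + 1$ on $\{1,\dots,2^n\}$ — the only nontrivial part of $\theta_n$ — maps the residue class $1 \pmod 4$ bijectively onto $0 \pmod 4$ and the class $3 \pmod 4$ bijectively onto $2 \pmod 4$, because $2^n \equiv 0 \pmod 4$ for $n \ge 3$. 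Since $\theta_n\ev{\mathbf{x}} = \mathbf{x}$ means $x_j = x_{2^n - j + 1}$ for every $j$, re-indexing each sum by $k = 2^n - j + 1$ yields $\sum_{j \equiv 1} x_j = \sum_{k \equiv 0} x_k$ and $\sum_{j \equiv 3} x_j = \sum_{k \equiv 2} x_k$. Hence $\textup{E}_2\ev{x_1 - x_3}$ and $\textup{E}_2\ev{x_4 - x_2}$ take the same value at $\mathbf{x}$, and substituting back gives $Y^n\ev{\mathbf{x}} - X^n\ev{\mathbf{x}} = x_0\left[\textup{E}_2\ev{x_1 - x_3}\right]^2$, as claimed.

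There is no genuinely hard step here: once one spots the factorization $(x_1 - x_3)(x_4 - x_2)$ of the ``switch'' between $M^n_0$ and $M^n_1$, the rest is mechanical. The only point requiring a little care is the bookkeeping of residues modulo $4$ under the reflection $j \mapsto 2^n - j + 1$ — specifically, using $n \ge 3$ so that $4 \mid 2^n$ and the reflection permutes the four residue classes mod $4$ as the product of transpositions $(1\ 0)(3\ 2)$ — which is precisely what turns a product of two \emph{a priori} distinct linear forms into a perfect square, and is where the hypothesis $\theta_n\ev{\mathbf{x}} = \mathbf{x}$ is genuinely used.
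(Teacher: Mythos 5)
Your proposal is correct and follows essentially the same route as the paper: cancel $\Gamma_n$, factor $x_1x_4+x_2x_3-x_1x_2-x_3x_4=(x_1-x_3)(x_4-x_2)$ using that $\textup{E}_2$ is a homomorphism, and then use $\theta_n$-invariance to identify $\textup{E}_2\ev{x_4-x_2}$ with $\textup{E}_2\ev{x_1-x_3}$. Your explicit bookkeeping of how the reflection $j\mapsto 2^n-j+1$ permutes the residue classes mod $4$ is in fact a slightly more careful rendering of the step the paper justifies via $x_2=x_{2^n-1}$, $x_4=x_{2^n-3}$ together with Remark \ref{rmk:sigma_exceeds_threshold}.
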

\begin{proof}
%

We calculate,
\begin{align*}
& Y^n - X^n =  ( \Gamma_n  + M^n_{1} ) - ( \Gamma_n  + M^n_0  )  \\
& =  M^n_1 - M^n_0  = x_0 \textup{E}_2 \ev{ x_1 x_4 + x_2 x_3 } - x_0 \textup{E}_2 \ev{ x_1 x_2 + x_3 x_4 } && \text{ by Definition \ref{def:infinity_vertex_defn} } \\ 
& = x_0 \textup{E}_2 \ev{ x_1 x_4 + x_2 x_3 - x_1 x_2 - x_3 x_4} \quad  && \text{ since $\textup{E}_2$ is a homomorphism } \\ 
& = x_0 \textup{E}_2  \ev{ x_1 - x_3 } \textup{E}_2\ev{ - x_2 + x_4 }  \quad  &&  \\ 
& = x_0  \textup{E}_2  \ev{ x_1 - x_3 } \textup{E}_2\ev{ - x_{2^n - 1} + x_{2^n-3} } &&
\end{align*}
because $\mathbf{x} = \theta_n(\mathbf{x})$ implies $x_2 = x_{2^n-1}$ and $x_4 = x_{2^n-3}$.  Continuing, we may write
\begin{align*}
& Y^n - X^n = x_0 \textup{E}_2 \ev{ x_1 - x_3 } \textup{E}_2 \ev{ - x_{4 - 1} + x_{4-3} }   \quad  && \text{ by Remark \ref{rmk:sigma_exceeds_threshold} } \\
& = x_0 \lb \textup{E}_2 \ev{ x_1 - x_3 } \rb^{2}.   \quad  &&   
\end{align*}
\end{proof}

Let $\ev{ \lambda, \mathbf{x} }, \ev{ \mu, \mathbf{y}}$ be the principal eigenpairs of $X^n$ and $Y^n$, respectively. As $\theta_n$ is an automorphism of $X^n$, it follows, by Lemma \ref{lem:automorphism_eigenvect}, that $\theta_n\ev{\mathbf{x}} = \mathbf{x}$. In particular, using Lemma \ref{lem:basis_step}, we obtain
$$ \mu = Y^n\ev{ \mathbf{y} } \geq Y^n\ev{ \mathbf{x} } = X^n \ev{ \mathbf{x} } +  x_0 \lb \textup{E}_2 \ev{ x_1 - x_3 } \rb^{2} = \lambda +  x_0 \lb \textup{E}_2 \ev{ x_1 - x_3 } \rb^{2}$$
In other words, the principal eigenvalue of $Y^n$ is greater than or equal to that of $X^n$. We claim that they are not equal. For this purpose, it is enough to show that $\textup{E}_2 \ev{ x_1 - x_3 } $ is non-zero. This is proven in Lemma \ref{lem:main_lemma} below. 

\begin{defn}
Given $i \geq 0$, we define a mapping $\sigma_i: \Z^+ \rightarrow \Z^+$, as follows:
$$ 
\sigma_i\ev{ j } = \begin{cases} j + 2^{i} & \text{ if } j\equiv 1,\ldots,2^i \pmod{2^{i+1}} \\ j - 2^{i} & \text{ if } j\equiv 1+2^i,\ldots,2^{i+1} \pmod{2^{i+1}}
\end{cases}
$$
We put $\sigma_{-1} = \mathrm{id}$. We also allow $\sigma_i$ to act on $\C[x_0,x_1,\ldots,x_N]$, via $\sigma_i\ev{ x_j } = x_{ \sigma_i\ev{ j }}$. 
\end{defn}

For example, the restriction of $\sigma_0$ to $[8]$ is the permutation \scalebox{1.4}{$\binom{12345678}{21436587}$}, whereas the restriction of $\sigma_1$ to $[8]$ is the permutation \scalebox{1.4}{$\binom{12345678}{34127856}$}.

\begin{lemma}
\label{lem:helper_cycle}
Let $\mathbf{x}\in \C^{N+1}$ such that $\theta_n\ev{\mathbf{x}} = \mathbf{x}$. 
\begin{enumerate}
\item $\textup{E}_3\ev{x_{2i}} = \textup{E}_3\ev{x_{9-2i}}$ for each $i=1,2,3,4$.
\item If $\mathbf{x} = \sigma_0\ev{ \mathbf{x} }$, then we have
$$  \textup{E}_3\ev{ x_7 } =  \textup{E}_3\ev{ x_1 } \text{ and }  \textup{E}_3\ev{ x_5 } =  \textup{E}_3\ev{ x_3 }  $$
\item If $\mathbf{x} = \sigma_0\ev{ \mathbf{x} } = \sigma_1\ev{ \mathbf{x} }$, then we have
$$  \textup{E}_3\ev{ x_1 } =  \textup{E}_3\ev{ x_2 } = \ldots = \textup{E}_3\ev{ x_8 }  $$
\end{enumerate}
\end{lemma}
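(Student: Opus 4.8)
The plan is to exploit the recursive/averaging structure of the map $\textup{E}_3$ together with the symmetry hypotheses to collapse the eight values $\textup{E}_3(x_1),\ldots,\textup{E}_3(x_8)$ onto each other. For part (1), the key observation is that $\theta_n$ sends an index $j$ to $2^n-j+1$, so it identifies $x_{2i}$ with $x_{2^n-2i+1}$; applying $\textup{E}_3$ and using Remark~\ref{rmk:sigma_exceeds_threshold} (which lets us reduce an index modulo $8$ once $\textup{E}_3$ is applied, since $2^n \equiv 0 \pmod 8$ for $n\ge 3$), we get $\textup{E}_3(x_{2i}) = \textup{E}_3(x_{2^n-2i+1}) = \textup{E}_3(x_{-2i+1}) = \textup{E}_3(x_{9-2i})$, where the last step again uses Remark~\ref{rmk:sigma_exceeds_threshold} to add $8$ to the index. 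So part (1) is essentially bookkeeping with $\theta_n$ and the modular reduction.

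For part (2), I would unwind the definition of $\sigma_0$: it swaps $j \leftrightarrow j+1$ for odd $j$, i.e., on residues mod $2$ it transposes $1$ with $2$, $3$ with $4$, etc. Since $\sigma_0(\mathbf{x}) = \mathbf{x}$ forces $x_j = x_{\sigma_0(j)}$ for all $j$, and $\textup{E}_3$ sums coordinates over a full residue class mod $8$, the claim is that applying $\sigma_0$ permutes each such residue class (mod $8$) to another residue class in a way that realizes the stated identifications. Concretely, $\sigma_0$ maps the class $\{j : j \equiv 7 \pmod 8\}$ bijectively onto $\{j : j\equiv 8 \pmod 8\}$? No — I need to be careful: $\sigma_0$ preserves the partition into classes mod $8$ only up to pairing odd/even classes. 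The honest route is: $\textup{E}_3(x_7) = \sum_{s} x_{7+8s}$, and $\sigma_0(7+8s) = 7+8s+1 = 8+8s$, so $\sigma_0$ sends the residue-$7$ class to the residue-$0$ class; combined with part (1)'s type of reasoning (or directly with $\theta_n$) this would not immediately give $\textup{E}_3(x_7) = \textup{E}_3(x_1)$. The cleaner approach is to combine $\sigma_0$-invariance with $\theta_n$-invariance: from $\theta_n$ we already have $\textup{E}_3(x_2) = \textup{E}_3(x_7)$ (by part (1) with $i=1$: $\textup{E}_3(x_2)=\textup{E}_3(x_7)$), and from $\sigma_0$-invariance $x_1 = x_2$ hence $\textup{E}_3(x_1) = \textup{E}_3(x_2)$, chaining to $\textup{E}_3(x_7)$; similarly $\textup{E}_3(x_3) = \textup{E}_3(x_4) = \textup{E}_3(x_5)$ using part (1) with $i=2$. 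So part (2) follows by composing part (1) with the coordinate equalities $x_j = x_{\sigma_0(j)}$.

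For part (3), I would iterate: $\sigma_1$ invariance gives $x_j = x_{\sigma_1(j)}$ where $\sigma_1$ transposes residue classes $\{1,2\}\leftrightarrow\{3,4\}$ mod $4$, i.e., it swaps $1\leftrightarrow 3$, $2\leftrightarrow 4$, $5\leftrightarrow 7$, $6\leftrightarrow 8$ within each block of $8$. Thus $\textup{E}_3(x_1) = \textup{E}_3(x_3)$ and $\textup{E}_3(x_2) = \textup{E}_3(x_4)$ and so on directly from $\sigma_1$-invariance (again each such transposition moves one residue-mod-$8$ class to another, which I should check sends the summand set for $\textup{E}_3(x_1)$ to that for $\textup{E}_3(x_3)$ — plausibly true since adding $2$ commutes with adding $8s$). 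Combining everything: from part (2) we have the chains $\textup{E}_3(x_1)=\textup{E}_3(x_2)=\textup{E}_3(x_7)=\textup{E}_3(x_8)$ (the last by part (1), $i=4$: $\textup{E}_3(x_8)=\textup{E}_3(x_1)$) and $\textup{E}_3(x_3)=\textup{E}_3(x_4)=\textup{E}_3(x_5)=\textup{E}_3(x_6)$, and then $\sigma_1$-invariance bridges the two chains via $\textup{E}_3(x_1)=\textup{E}_3(x_3)$, yielding equality of all eight. The main obstacle I anticipate is purely notational: keeping straight which index reductions are legal under $\textup{E}_3$ (Remark~\ref{rmk:sigma_exceeds_threshold} only permits shifts by $2^t$ with $t \ge 3$) and verifying that each of $\theta_n$, $\sigma_0$, $\sigma_1$ genuinely permutes the fibers of the "reduce mod $8$" map in the required way — these are all finite checks on $[8]$, but they must be done carefully and in the right order so that each equality is justified by an identity already established.
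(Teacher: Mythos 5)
Your proposal is correct and takes essentially the same route as the paper: part (1) via $\theta_n$-invariance plus the mod-$8$ reduction of indices under $\textup{E}_3$ (Remark \ref{rmk:sigma_exceeds_threshold}), part (2) by chaining part (1) with the coordinate identifications forced by $\sigma_0(\mathbf{x})=\mathbf{x}$, and part (3) by bridging the two resulting chains $\{1,2,7,8\}$ and $\{3,4,5,6\}$ using $\sigma_1$-invariance. The worry you raise mid-proof about $\sigma_0$ sending the residue-$7$ class to the residue-$0$ class is a dead end you correctly abandon; the final argument matches the paper's.
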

\begin{proof}
\begin{enumerate}
\item We apply $\theta_n$ and Remark \ref{rmk:sigma_exceeds_threshold}, to obtain, for each $i=1,2,3,4$, 
$$ 
\textup{E}_3\ev{x_{2i}} = \textup{E}_3\ev{x_{2^n - 2i + 1}} = \textup{E}_3\ev{x_{8 - 2i + 1}} = \textup{E}_3\ev{x_{9- 2i}}
$$
\item We have,
\begin{align*}
& \textup{E}_3\ev{ x_7 }  = \textup{E}_3\ev{ x_2 }	&& \text{ by Part 1} \\ 
& = \textup{E}_3\ev{ x_1 } 										&&\text{ since $\sigma_0\ev{ \mathbf{x} } = \mathbf{x}$ } 
\end{align*}
By a similar calculation, we have $\textup{E}_3\ev{ x_3 } = \textup{E}_3\ev{ x_5 }$.

\item By the assumption that $\sigma_1\ev{ \mathbf{x} } = \mathbf{x}$, we obtain
$$ \textup{E}_3\ev{ x_{5} } = \textup{E}_3\ev{ x_{1} } $$
Combined with Part 1 and Part 2, the proof is complete.  

%

\end{enumerate}
\end{proof}

\begin{defn}
Given $0\leq r \leq n-2$ and $\mathbf{x} = \ev{x_1,\ldots,x_N} $, we define 
$$
f^{n}_r\ev{\mathbf{x}} = 
\begin{cases}
2^{r+1} \textup{E}^{n}_{r+2} \ev{  x_1 - x_{ 1 + 2^{r+1} } } \cdot \textup{E}^{n}_{r+3} \ev{ x_{1} - x_{ 1 +  2^{r+2} } }   \cdot  \textup{E}^{n}_{r+3} \ev{  - x_{ 1 + 2^{r+1}  } + x_{ 1 + 2^{r+1} + 2^{r+2} } } \\ 
\hfill \text{ if } 0\leq r \leq n-3 \\ 
2^{n-1} \ev{  x_1 - x_{ 1 + 2^{n-1} } } x_{1}   \ev{ - x_{ 1 + 2^{n-1}  } } \\
\hfill \text{ if } r = n-2
\end{cases} 
$$
\end{defn}
Recall that $\textup{E}^{n}_{n}$ acts as the identity. Hence, for each $0\leq r\leq n-2$, the polynomial $f^{n}_r\ev{\mathbf{x}}$ is divisible by $\textup{E}^{n}_{r+2} \ev{  x_1 - x_{ 1 + 2^{r+1} } }$. 

Now, we develop some properties of the polynomials $\{ f^{n}_r\ev{\mathbf{x}} \}_{0 \leq r \leq n-2}$. 
\begin{lemma}
\label{lem:odd_even_eight}
Given $\mathbf{x} = \ev{x_0,x_1,\ldots,x_N} $ such that $\theta_n \ev{\mathbf{x}} = \mathbf{x}$ and $\sigma_0\ev{ \mathbf{x} } = \mathbf{x}$, then for any $2\leq r \leq n-2$, and $j = 0,1$:
\begin{align*}
p_j   \textup{E}_{r}^{n} \ev{ x_{1} } 		&= \textup{E}^{n+1}_{r+1}\ev{ x_{1}}	 \\
p_j  \textup{E}_{r}^{n} \ev{ x_{1+2^{r-1}} } &= \textup{E}^{n+1}_{r+1}\ev{ x_{1+2^r}} 
\end{align*}
\end{lemma}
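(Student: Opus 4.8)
The plan is to verify the two displayed identities by a direct computation: for $j=1$ they will hold already at the level of polynomials, while for $j=0$ they hold after evaluation at $\mathbf{x}$, where the $\sigma_0$-invariance enters. Only the hypothesis $\sigma_0\ev{\mathbf{x}}=\mathbf{x}$ and the assumption $r\ge 2$ are actually needed.

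First I would unfold the left-hand sides. By the definition of $\textup{E}^{n}_r$,
$$\textup{E}^{n}_r\ev{x_1}=\sum_{s=0}^{2^{n-r}-1}x_{1+s\cdot 2^{r}},\qquad \textup{E}^{n}_r\ev{x_{1+2^{r-1}}}=\sum_{s=0}^{2^{n-r}-1}x_{1+2^{r-1}+s\cdot 2^{r}}.$$
Since each $p_j$ is a $\C$-algebra endomorphism with $p_0\ev{x_i}=x_{2i}$ and $p_1\ev{x_i}=x_{2i-1}$ (Remark~\ref{rmk:repeated_app} with $r=1$; and for $1\le i\le 2^n$ no reduction $\Mod{V_{n+1}}$ is triggered), substituting and collecting terms gives
$$p_1\,\textup{E}^{n}_r\ev{x_1}=\sum_{s=0}^{2^{n-r}-1}x_{1+s\cdot 2^{r+1}}=\textup{E}^{n+1}_{r+1}\ev{x_1},\qquad p_0\,\textup{E}^{n}_r\ev{x_1}=\sum_{s=0}^{2^{n-r}-1}x_{2+s\cdot 2^{r+1}}=\textup{E}^{n+1}_{r+1}\ev{x_2},$$
and, in the same way, $p_1\,\textup{E}^{n}_r\ev{x_{1+2^{r-1}}}=\textup{E}^{n+1}_{r+1}\ev{x_{1+2^{r}}}$ and $p_0\,\textup{E}^{n}_r\ev{x_{1+2^{r-1}}}=\textup{E}^{n+1}_{r+1}\ev{x_{2+2^{r}}}$. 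In each case one just checks that the index set obtained is precisely $\{\,k\in\{1,\ldots,2^{n+1}\}:k\equiv c\pmod{2^{r+1}}\,\}$ with $c\in\{1,2,1+2^r,2+2^r\}$ the appropriate residue. This already proves the $j=1$ assertions.

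For $j=0$ it then remains to show that, after evaluation at $\mathbf{x}$, $\textup{E}^{n+1}_{r+1}\ev{x_2}=\textup{E}^{n+1}_{r+1}\ev{x_1}$ and $\textup{E}^{n+1}_{r+1}\ev{x_{2+2^{r}}}=\textup{E}^{n+1}_{r+1}\ev{x_{1+2^{r}}}$. Here I would use $\sigma_0$: because $r\ge 2$, every index $k$ occurring in $\textup{E}^{n+1}_{r+1}\ev{x_1}$ (resp. $\textup{E}^{n+1}_{r+1}\ev{x_{1+2^{r}}}$) satisfies $k\equiv 1$ (resp. $k\equiv 1+2^r$) modulo $2^{r+1}$ and is therefore odd, so $\sigma_0$ sends $k$ to $k+1$. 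Hence $\sigma_0$ maps the defining index set of $\textup{E}^{n+1}_{r+1}\ev{x_1}$ bijectively onto that of $\textup{E}^{n+1}_{r+1}\ev{x_2}$, i.e. $\sigma_0\ev{\textup{E}^{n+1}_{r+1}\ev{x_1}}=\textup{E}^{n+1}_{r+1}\ev{x_2}$ as polynomials, and likewise with $x_{1+2^r},x_{2+2^r}$. Since $\sigma_0$ is an involution and $\sigma_0\ev{\mathbf{x}}=\mathbf{x}$, evaluating these polynomial identities at $\mathbf{x}$ gives exactly the two required equalities, and the proof is complete.

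I do not anticipate a genuine obstacle: the substance is index bookkeeping. The points that need care are (i) keeping track of the reductions $\Mod{V_n}$ and $\Mod{V_{n+1}}$ built into $p_j$ and $\textup{E}$ — the hypothesis $r\le n-2$ is exactly what guarantees no such reduction fires in the ranges that occur — and (ii) being explicit about the convention under which the level-$\ev{n+1}$ polynomials on the right are evaluated at the level-$n$ vector $\mathbf{x}$; for the $j=1$ identities this is irrelevant, since they hold as polynomial identities. The hypothesis $\theta_n\ev{\mathbf{x}}=\mathbf{x}$ is not used in the argument above and is presumably kept only for uniformity with the adjacent lemmas.
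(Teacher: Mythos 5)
Your proof is correct and takes essentially the same route as the paper's: one first records that $p_0\,\textup{E}^{n}_{r}(x_i)=\textup{E}^{n+1}_{r+1}(x_{2i})$ and $p_1\,\textup{E}^{n}_{r}(x_i)=\textup{E}^{n+1}_{r+1}(x_{2i-1})$, which settles $j=1$ outright, and then invokes $\sigma_0$-invariance to identify $\textup{E}^{n+1}_{r+1}(x_2)$ with $\textup{E}^{n+1}_{r+1}(x_1)$ and $\textup{E}^{n+1}_{r+1}(x_{2+2^r})$ with $\textup{E}^{n+1}_{r+1}(x_{1+2^r})$ for $j=0$. Your index bookkeeping is in fact slightly more careful than the paper's, whose final displayed line writes $x_{1+2^{r+1}}$ where $x_{1+2^{r}}$ is meant, and your observation that the hypothesis $\theta_n(\mathbf{x})=\mathbf{x}$ is not actually used is consistent with the paper's argument.
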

\begin{proof}
Note that $p_0 \textup{E}_{r}^{n} \ev{ x_{i} } = \textup{E}^{n+1}_{r+1}\ev{ x_{2i} }$ and $p_1 \textup{E}_{r}^{n} \ev{ x_{i} } = \textup{E}^{n+1}_{r+1}\ev{ x_{2i-1} }$. In particular, we obtain 
\begin{align*}
p_1 \textup{E}_{r}^{n} \ev{ x_{1} } & =  \textup{E}^{n+1}_{r+1}\ev{ x_{1}}  \\ 
p_1 \textup{E}_{r}^{n} \ev{ x_{1+2^{r-1}} } & = \textup{E}^{n+1}_{r+1}\ev{ x_{1+2^r}} 
\end{align*}
and also, using the assumption that $\sigma_0\ev{ \mathbf{x} } = \mathbf{x}$, we obtain
\begin{align*}
&p_0 \textup{E}_{r}^{n} \ev{ x_{1} } = \textup{E}^{n+1}_{r+1}\ev{ x_{2}} = \textup{E}^{n+1}_{r+1}\ev{ x_{1}} \\
&p_0 \textup{E}_{r}^{n} \ev{ x_{1+2^{r-1}} } = \textup{E}^{n+1}_{r+1}\ev{ x_{2+2^{r} }}  = \textup{E}^{n+1}_{r+1}\ev{ x_{1+2^{r+1} }} 
\end{align*}
\end{proof}

\begin{lemma}
\label{lem:even_odd_f}
If $\mathbf{x} = \ev{x_0,x_1,\ldots,x_N} $ is such that $\theta_n \ev{\mathbf{x}} = \mathbf{x}$ and $\sigma_0\ev{ \mathbf{x} } = \mathbf{x}$, then 
$$\ev{p_0 + p_1}\ev{ f^{n}_r\ev{ \mathbf{x} } } = f_{r+1}^{n+1}\ev{ \mathbf{x} }$$
\end{lemma}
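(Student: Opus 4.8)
The plan is to use that $p_0,p_1$ are $\C$-algebra endomorphisms while $f^{n}_r$, for $0\le r\le n-3$, is by definition a threefold product $2^{r+1}\,A\,B\,C$ with
$$A=\textup{E}^{n}_{r+2}\bigl(x_1-x_{1+2^{r+1}}\bigr),\qquad B=\textup{E}^{n}_{r+3}\bigl(x_1-x_{1+2^{r+2}}\bigr),\qquad C=\textup{E}^{n}_{r+3}\bigl(-x_{1+2^{r+1}}+x_{1+2^{r+1}+2^{r+2}}\bigr),$$
and $f^{n}_{n-2}=2^{n-1}\,(x_1-x_{1+2^{n-1}})\cdot x_1\cdot(-x_{1+2^{n-1}})$. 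Since $p_j$ is multiplicative, $(p_0+p_1)(f^n_r)=2^{r+1}\bigl[p_0(A)p_0(B)p_0(C)+p_1(A)p_1(B)p_1(C)\bigr]$, so it is enough to prove that for each of the three factors $\phi$ the polynomials $p_0(\phi)$ and $p_1(\phi)$ take the same value on every $\mathbf{x}$ with $\theta_n(\mathbf{x})=\sigma_0(\mathbf{x})=\mathbf{x}$, and that this common value is the corresponding factor of $f^{n+1}_{r+1}$. Granting this, on such $\mathbf{x}$ the bracket equals twice a single product of the three factors of $f^{n+1}_{r+1}$, which turns the prefactor $2^{r+1}$ into $2^{(r+1)+1}$, exactly the prefactor appearing in $f^{n+1}_{r+1}$.

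For the factor-wise claim I would invoke the two identities recorded in the proof of Lemma~\ref{lem:odd_even_eight},
$$p_0\,\textup{E}^{n}_{s}(x_i)=\textup{E}^{n+1}_{s+1}(x_{2i}),\qquad p_1\,\textup{E}^{n}_{s}(x_i)=\textup{E}^{n+1}_{s+1}(x_{2i-1}),$$
valid for all $2\le s\le n$ (reading $\textup{E}^{n+1}_{n+1}$ as the identity) by the same one-line computation, together with the fact that every variable occurring in $f^n_r$ has index $\equiv 1\pmod{2^{r+1}}$, hence odd. Applying $p_1$ and using $2(1+c)-1=1+2c$ sends the shift pattern $\{0,\,2^{r+1},\,2^{r+2},\,2^{r+1}+2^{r+2}\}$ of the variables of $f^n_r$ to the pattern $\{0,\,2^{r+2},\,2^{r+3},\,2^{r+2}+2^{r+3}\}$ of the variables of $f^{n+1}_{r+1}$ and raises each $\textup{E}$-index by one, yielding as polynomial identities $p_1(A)=\textup{E}^{n+1}_{r+3}(x_1-x_{1+2^{r+2}})$, $p_1(B)=\textup{E}^{n+1}_{r+4}(x_1-x_{1+2^{r+3}})$, and $p_1(C)=\textup{E}^{n+1}_{r+4}(-x_{1+2^{r+2}}+x_{1+2^{r+2}+2^{r+3}})$ — precisely the factors of $f^{n+1}_{r+1}$. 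For $p_0$ the identical computation with $2(1+c)$ in place of $2(1+c)-1$ gives the same expressions but with every variable index increased by one (hence even); since an odd index and its successor are interchanged by $\sigma_0$ and $\textup{E}^{n+1}_{s+1}(x_j)=\textup{E}^{n+1}_{s+1}(x_{j+1})$ on the $\sigma_0$-invariant locus whenever $j$ is odd (match $x_k$ with $x_{k+1}$ term by term in the two sums), $p_0(\phi)$ agrees with $p_1(\phi)$ there. The boundary case $r=n-2$ is immediate and needs no $\textup{E}$-maps: $p_1$ fixes $x_1$ and sends $x_{1+2^{n-1}}$ to $x_{1+2^n}$, while $p_0$ sends $x_1\mapsto x_2$ and $x_{1+2^{n-1}}\mapsto x_{2+2^n}$, and $\sigma_0(\mathbf{x})=\mathbf{x}$ identifies $x_2$ with $x_1$ and $x_{2+2^n}$ with $x_{1+2^n}$, so both triple products equal $(x_1-x_{1+2^n})\,x_1\,(-x_{1+2^n})$, whose $2^{n-1}+2^{n-1}=2^n$ multiple is $f^{n+1}_{n-1}$.

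The content is bookkeeping rather than conceptual, and the main thing to watch is keeping the two moduli apart: objects with superscript $n$ live modulo $2^n$, and after applying $p_j$ one works modulo $2^{n+1}$. The one place that requires a small extra remark is the edge cases $r\in\{n-4,n-3\}$, where an $\textup{E}$-index reaches $n-1$ or $n$, so Lemma~\ref{lem:odd_even_eight} as literally stated (for $\textup{E}$-index at most $n-2$) does not cover it; there the two displayed identities still hold by the same direct computation (and an $\textup{E}$-index equal to $n$ simply makes the map the identity), so nothing new is needed. Finally, each equality that invokes $\sigma_0$-invariance should be read as an identity of polynomials restricted to the locus where $\theta_n$ and $\sigma_0$ act trivially, rather than of polynomials outright.
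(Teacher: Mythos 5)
Your proposal is correct and follows essentially the same route as the paper: both arguments apply $p_0+p_1$ factor-by-factor to the product defining $f^n_r$, use the index-doubling identities $p_0\,\textup{E}^n_s(x_i)=\textup{E}^{n+1}_{s+1}(x_{2i})$ and $p_1\,\textup{E}^n_s(x_i)=\textup{E}^{n+1}_{s+1}(x_{2i-1})$ together with $\sigma_0$-invariance to see that the $p_0$- and $p_1$-images coincide, and thereby turn the prefactor $2^{r+1}$ into $2^{r+2}$, with the case $r=n-2$ handled by direct substitution. Your extra care is in fact warranted: the paper cites Lemma~\ref{lem:odd_even_eight} wholesale, but its two displayed identities do not literally cover the third factor (whose indices are $1+2^{r+1}$ and $1+2^{r+1}+2^{r+2}$ relative to $\textup{E}$-index $r+3$) nor the values $r=n-4,n-3$ where the $\textup{E}$-index reaches $n-1$ or $n$, so one really does need the general one-line identities you state, exactly as you observe.
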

\begin{proof} As $p_0$ and $p_1$ are homomorphisms, we obtain, by Lemma \ref{lem:odd_even_eight}, that

\textbf{Case 1:} $0 \leq r \leq n-3$.
\begin{align*}
& p_0 \ew{ f^{n}_r\ev{ \mathbf{x} } } + p_1 \ew{ f^{n}_r\ev{ \mathbf{x} } } \\
& = p_0 \lb 2^{r+1} \textup{E}^{n}_{r+2} \ev{  x_1 - x_{ 1 + 2^{r+1} } } \cdot \textup{E}^{n}_{r+3} \ev{ x_{1} - x_{ 1 +  2^{r+2} } }   \cdot  \textup{E}^{n}_{r+3} \ev{  - x_{ 1 + 2^{r+1}  } + x_{ 1 + 2^{r+1} + 2^{r+2} } } \rb \\
& + p_1 \lb 2^{r+1} \textup{E}^{n}_{r+2} \ev{  x_1 - x_{ 1 + 2^{r+1} } } \cdot \textup{E}^{n}_{r+3} \ev{ x_{1} - x_{ 1 +  2^{r+2} } }   \cdot  \textup{E}^{n}_{r+3} \ev{  - x_{ 1 + 2^{r+1}  } + x_{ 1 + 2^{r+1} + 2^{r+2} } } \rb \\
& = 2 \cdot 2^{r+1} \textup{E}^{n+1}_{r+3} \ev{  x_1 - x_{ 1 + 2^{r+2} } } \cdot \textup{E}^{n+1}_{r+4} \ev{ x_{1} - x_{ 1 +  2^{r+3} } }   \cdot  \textup{E}^{n+1}_{r+4} \ev{  - x_{ 1 + 2^{r+2}  } + x_{ 1 + 2^{r+2} + 2^{r+3} } } \\
& = f_{r+1}^{n+1}\ev{ \mathbf{x} }.
\end{align*}

\textbf{Case 2:} $r=n-2$. 
\begin{align*}
& p_0 \ew{ f^{n}_{n-2}\ev{ \mathbf{x} } }+ p_1\ew{ f^{n}_{n-2}\ev{ \mathbf{x} } } \\
& = p_0 \lb 2^{n-1} \ev{  x_1 - x_{ 1 + 2^{n-1} } } x_{1}  \ev{ - x_{ 1 + 2^{n-1}  }   } \rb + p_1 \lb 2^{n-1} \ev{  x_1 - x_{ 1 + 2^{n-1} } } x_{1}  \ev{ - x_{ 1 + 2^{n-1}  }   } \rb  \\
& = 2 \cdot 2^{n-1} \ev{  x_1 - x_{ 1 + 2^{n} } } x_{1}  \ev{ - x_{ 1 + 2^{n}  }   } \\
& = f_{n-1}^{n+1}\ev{ \mathbf{x} }
\end{align*}
\end{proof}

\begin{remark}
\label{rmk:interaction_sigma_p}
It is easy to check that, for each $i \in V_{n-1}$, $1\leq r \leq n-2$ and $j\in \{0,1\}$, we have
$$
p_j \sigma_{r-1}\ev{ x_i } = \sigma_r p_j\ev{ x_i } 
$$

\end{remark}

\begin{lemma}
\label{lem:induction_cycles}
Let $n\geq 3$, $0\leq r \leq n-2$ and $2\leq k \leq n$ be fixed. Let $\mathbf{x}\in \C^{N+1}$ such that $\theta_n\ev{\mathbf{x}} = \mathbf{x}$ and $\mathbf{x} = \sigma_i\ev{ \mathbf{x} }$ for each $i=-1,0,\ldots, r-1$. Then,
$$ G_k^{n}\ev{ \mathbf{x} } - G_k^{n}\ev{ \sigma_{r} \ev{  \mathbf{x}  }} = \begin{cases}  f_{r}^{n} \ev{ \mathbf{x} } & \text{ if } k = n-r \\ 0 & \text{ if }  k\neq n-r
\end{cases} $$
\end{lemma}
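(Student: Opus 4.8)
The plan is to induct on $n$. The base case $n=3$ requires checking finitely many instances ($r\in\{0,1\}$, $k\in\{2,3\}$); since the arguments below for the cases $k=n$ and for ``$k<n$ with $r=0$'' are valid for all $n\ge 3$, the only genuinely new verification at $n=3$ is the triple $(n,k,r)=(3,2,1)$, a short computation. For the inductive step we assume the statement for $n-1$ and treat separately the case $k=n$, which is not reached by the recursion $G^n_k=p_0(G^{n-1}_k)+p_1(G^{n-1}_k)$, and the case $2\le k\le n-1$.

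\emph{The case $k=n$.} Here $n-r=k$ exactly when $r=0$, so we must show the difference is $f^n_0(\mathbf{x})$ if $r=0$ and $0$ if $r\ge 1$. Put $u_i=\textup{E}^n_3(x_i)$. Since $\textup{E}^n_3$ is an algebra homomorphism and $G^n_n=\textup{E}^n_3(G^3_3)$, while $\textup{E}^n_3(x_j)$ depends only on $j$ modulo $8$ (Remark \ref{rmk:sigma_exceeds_threshold}), we have $G^n_n(\mathbf{x})=G^3_3(u_1,\dots,u_8)$ and $G^n_n(\sigma_r\mathbf{x})=G^3_3(u_{j_1},\dots,u_{j_8})$, where $j_i\in\{1,\dots,8\}$ is $\sigma_r(i)$ reduced mod $8$. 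If $r\ge 3$ then $\sigma_r(i)=i+2^r\equiv i\pmod{8}$, so the difference vanishes; if $r=2$ then $(j_1,\dots,j_8)=(5,6,7,8,1,2,3,4)$, a cyclic shift by $4$, under which both tight cycles $C^3$ and $D^3$ are invariant, so again the difference is $0$. If $r\in\{0,1\}$ we use Lemma \ref{lem:helper_cycle}: part (1), which needs only $\theta_n\mathbf{x}=\mathbf{x}$, gives $(u_1,\dots,u_8)=(a,b,c,d,d,c,b,a)$; when $r=1$ the extra hypothesis $\sigma_0\mathbf{x}=\mathbf{x}$ forces $a=b$ and $c=d$, so $G^n_n(\mathbf{x})=2(a+c)^3$ is unchanged by $\sigma_1$; and when $r=0$ one expands the two values of $G^3_3$ to get the polynomial identity
$$ G^3_3(a,b,c,d,d,c,b,a)-G^3_3(b,a,d,c,c,d,a,b)=2(a+d-b-c)(a-d)(b-c), $$
whose right side is $2\,\textup{E}^n_2(x_1-x_3)\,\textup{E}^n_3(x_1-x_5)\,\textup{E}^n_3(x_7-x_3)=f^n_0(\mathbf{x})$, using $\textup{E}^n_2(x_i)=\textup{E}^n_3(x_i)+\textup{E}^n_3(x_{i+4})$.

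\emph{The case $2\le k\le n-1$.} If $r=0$, then $n-r=n\ne k$ and the assertion is $G^n_k(\mathbf{x})=G^n_k(\sigma_0\mathbf{x})$; this holds because $\sigma_0\circ p_0=p_1$ and $\sigma_0\circ p_1=p_0$ give the polynomial identity $\sigma_0(G^n_k)=p_1(G^{n-1}_k)+p_0(G^{n-1}_k)=G^n_k$ (the base instance $\sigma_0(G^3_2)=G^3_2$ is checked directly). If $r\ge 1$, apply the recursion and Remark \ref{rmk:interaction_sigma_p}, which in vector form reads $p_j(\sigma_r\mathbf{x})=\sigma_{r-1}(p_j\mathbf{x})$, to get
$$ G^n_k(\mathbf{x})-G^n_k(\sigma_r\mathbf{x})=\sum_{j=0,1}\Big(G^{n-1}_k(p_j\mathbf{x})-G^{n-1}_k\big(\sigma_{r-1}(p_j\mathbf{x})\big)\Big). $$
Since $r\ge 1$ we have $\sigma_0\mathbf{x}=\mathbf{x}$, hence $p_0\mathbf{x}=p_1\mathbf{x}=:\mathbf{z}$ and the sum equals $2\big(G^{n-1}_k(\mathbf{z})-G^{n-1}_k(\sigma_{r-1}\mathbf{z})\big)$. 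A short index check gives $p_0(\theta_{n-1}(i))=\theta_n(p_1(i))$, so that $\theta_n\mathbf{x}=\mathbf{x}$ together with $p_0\mathbf{x}=p_1\mathbf{x}$ forces $\theta_{n-1}\mathbf{z}=\mathbf{z}$; and $p_j\sigma_i=\sigma_{i+1}p_j$ together with the assumed invariances of $\mathbf{x}$ forces $\sigma_i\mathbf{z}=\mathbf{z}$ for $-1\le i\le r-2$. Hence $\mathbf{z}$ satisfies the hypotheses of the lemma at level $n-1$ with parameter $r-1$, and the inductive hypothesis identifies $G^{n-1}_k(\mathbf{z})-G^{n-1}_k(\sigma_{r-1}\mathbf{z})$ with $f^{n-1}_{r-1}(\mathbf{z})$ when $k=(n-1)-(r-1)=n-r$ and with $0$ otherwise. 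When $k=n-r$ we conclude by noting $2f^{n-1}_{r-1}(\mathbf{z})=(p_0+p_1)(f^{n-1}_{r-1})(\mathbf{x})=f^n_r(\mathbf{x})$, which is Lemma \ref{lem:even_odd_f} with $n-1$ in place of $n$ — its proof invokes only $\sigma_0$-invariance, which is available here.

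I expect the main obstacle to be the case $k=n$: lying outside the recursion, it forces us to collapse $G^n_n$ to the single polynomial $G^3_3$ in the eight sums $\textup{E}^n_3(x_i)$ and then to verify by direct expansion the identity matching $G^3_3(a,b,c,d,d,c,b,a)-G^3_3(b,a,d,c,c,d,a,b)$ with $f^n_0(\mathbf{x})$. A secondary subtlety is that a $\theta_n$-invariant level-$n$ vector does not in general push forward under $p_0$ or $p_1$ to a $\theta_{n-1}$-invariant vector; what makes the descent legitimate is the coincidence $p_0\mathbf{x}=p_1\mathbf{x}$, which holds precisely because $\sigma_0\mathbf{x}=\mathbf{x}$ whenever $r\ge 1$.
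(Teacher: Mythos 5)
Your proof is correct and follows essentially the same route as the paper's: the same case split between $k=n$ (collapsing $G^n_n$ to $G^3_3$ in the eight variables $\textup{E}_3(x_i)$ via Lemma \ref{lem:helper_cycle}, with the $r=0$ expansion yielding exactly the factorization $2\,\textup{E}_2(x_1-x_3)\textup{E}_3(x_1-x_5)\textup{E}_3(x_7-x_3)=f^n_0(\mathbf{x})$) and $2\le k\le n-1$ (the $p_0,p_1$ recursion combined with Remark \ref{rmk:interaction_sigma_p} and Lemma \ref{lem:even_odd_f}). The only differences are organizational: you induct on $n$ and push the vector forward to $\mathbf{z}=p_0\mathbf{x}=p_1\mathbf{x}$, explicitly checking that $\mathbf{z}$ inherits the hypotheses at level $n-1$ (a point the paper's induction on $r$ leaves implicit), and your symmetry arguments for the subcases $r=1,2$ of $k=n$ replace the paper's direct expansions.
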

\begin{proof}
\textbf{Case 1:} $k=n$. In this case, we would like to show that 
$$ 
G_n^{n}\ev{ \mathbf{x} } - G_n^{n}\ev{ \sigma_{r} \ev{  \mathbf{x}  }} = 
\begin{cases}  
f_{0}^{n} \ev{ \mathbf{x} } & \text{ if } r=0 \\ 
0 & \text{ if }  1\leq r 
\end{cases} 
$$
By Definition \ref{def:cycles}, have
\begin{equation}
\label{eq:G_n_defn}
G_n^n = \textup{E}_3 \ev{ G_3^3 }
\end{equation}
By Remark \ref{rmk:sigma_exceeds_threshold}, we have $\text{E}^{n}_3  \ev{ x_i } =  \text{E}^{n}_3  \ev{ \sigma_t\ev{ x_i } }$
whenever $t\geq 3$. Therefore, we only need to consider $r = 0,1,2$.

\begin{enumerate}
\item[a)] Assume $r=0$. By Lemma \ref{lem:helper_cycle}, we have 
\begin{align}
\begin{split}
\label{eq:dct_subs_theta}
\textup{E}_3\ev{ x_2 } = \textup{E}_3\ev{ x_7 } \qquad \text{and} \qquad \textup{E}_3\ev{ x_4 }= \textup{E}_3\ev{ x_5 } \\ 
\textup{E}_3\ev{ x_6 } = \textup{E}_3\ev{ x_3 } \qquad \text{and} \qquad 
\textup{E}_3\ev{ x_8 }= \textup{E}_3\ev{ x_1 }
\end{split}
\end{align}
We calculate,
$$
\begin{tblr}{}
C^{3}\ev{ \mathbf{x} } - C^{3} \ev{ \sigma_{0} \ev{  \mathbf{x}  }} \\
= ( x_1 x_2 x_3 + x_2 x_3 x_4 + x_3 x_4 x_5 + x_4 x_5 x_6 + x_5 x_6 x_7 + x_6 x_7 x_8 + x_7 x_8 x_1 + x_8 x_1 x_2)\\
- \ev{ x_2 x_1 x_4 + x_1 x_4 x_3 + x_4 x_3 x_6 + x_3 x_6 x_5 + x_6 x_5 x_8 + x_5 x_8 x_7 + x_8 x_7 x_2 + x_7 x_2 x_1 } \\
& \hspace{-5cm} \text{by Definition \ref{def:two_cycles}}\\	
=  2 x_1 x_3 x_7 + 2 x_3 x_5 x_7 + 2 x_3 x_5^2 + 2 x_1^2 x_7 - 2 x_1 x_5 x_7  - 2 x_1 x_3 x_5 - 2 x_3^2 x_5 - 2 x_1 x_7^2  \\
& \hspace{-5cm} \text{by Equations \ref{eq:dct_subs_theta}}
\end{tblr}
$$
On the other hand,
$$
\begin{tblr}{}
D^{3}\ev{ \mathbf{x} } - D^{3}\ev{ \sigma_{0} \ev{  \mathbf{x}  }} \\
= \ev{ x_1 x_4 x_6 + x_2 x_5 x_7 + x_3 x_6 x_8 + x_4 x_7 x_1 + x_5 x_8 x_2 + x_6 x_1 x_3 + x_7 x_2 x_4 + x_8 x_3 x_5}\\	
-\ev{ x_2 x_3 x_5 + x_1 x_6 x_8 + x_4 x_5 x_7 + x_3 x_8 x_2 + x_6 x_7 x_1 + x_5 x_2 x_4 + x_8 x_1 x_3 + x_7 x_4 x_6}\\
& \hspace{-5cm} \text{by Definition \ref{def:two_cycles}}\\		
= 2 x_1 x_3 x_5  + 2 x_5 x_7^2 + 2 x_1 x_3^2 + 2 x_1 x_5 x_7    -  2 x_3 x_5 x_7  - 2 x_1^2 x_3  - 2 x_5^2 x_7 - 2 x_1 x_3 x_7   \\	
& \hspace{-5cm} \text{by Equations \ref{eq:dct_subs_theta}}
\end{tblr}
$$
Therefore, 
$$
\begin{tblr}{}
& G_n^{n}\ev{ \mathbf{x} } - G_n^{n}\ev{ \sigma_{0} \ev{  \mathbf{x}  } }  = E_3 \ev{ C^{3} \ev{ \mathbf{x} } + D^{3} \ev{ \mathbf{x} } } -  E_3 \ev{ C^{3} \ev{ \sigma_{1} \ev{  \mathbf{x}  } } + D^{3} \ev{ \sigma_{1} \ev{  \mathbf{x}  } } } \\
& = \textup{E}_3 \ev{ C^{3} \ev{ \mathbf{x} } - C^{3} \ev{ \sigma_{1} \ev{  \mathbf{x}  }  } } + E_3 \ev{ D^{3} \ev{ \mathbf{x} } - D^{3} \ev{ \sigma_{1} \ev{  \mathbf{x}  } } }  \\ 
& = \textup{E}_3 (   2 x_1 x_3 x_7 + 2 x_3 x_5 x_7 + 2 x_3 x_5^2 + 2 x_1^2 x_7 -  2 x_1 x_5 x_7  - 2 x_1 x_3 x_5  - 2 x_3^2 x_5 - 2 x_1 x_7^2  ) \\
& + \textup{E}_3 (  2 x_1 x_3 x_7 + 2 x_3 x_5 x_7 + 2 x_3 x_5^2 + 2 x_1^2 x_7 -  2 x_1 x_5 x_7  - 2 x_1 x_3 x_5  - 2 x_3^2 x_5 - 2 x_1 x_7^2 )   \\ 
& = 2 \textup{E}_3 \ev{ x_3 x_5^2   +  x_1^2 x_7  +   x_5 x_7^2 +   x_1 x_3^2  - x_3^2 x_5  - x_1 x_7^2 - x_5^2 x_7 - x_1^2 x_3 }\\
& = 2\textup{E}_3 \lb  (x_1 - x_3 + x_5 - x_7) \ev{ x_1 - x_5 } \ev{ -x_3 + x_7} \rb \\ 
& = 2\textup{E}_3  (x_1 - x_3 + x_5 - x_7) \textup{E}_3\ev{ x_1 - x_5 } \textup{E}_3\ev{ -x_3 + x_7}  \\
& = 2\textup{E}_2  ( x_1 - x_3 ) \textup{E}_3\ev{ x_1 - x_5 } \textup{E}_3\ev{ -x_3 + x_7}  = f_0^{n}\ev{ \mathbf{x} }
\end{tblr}
$$

\item[b)] If $r=1$, then, by Lemma \ref{lem:helper_cycle}, the assumption $\mathbf{x} = \sigma_0\ev{ \mathbf{x}}$ implies that 
\begin{equation}
\label{eq:dct_subs_sigma}
\textup{E}_3\ev{ x_1 } = \textup{E}_3\ev{ x_7 } \qquad \text{and} \qquad \textup{E}_3\ev{ x_3 } = \textup{E}_3\ev{ x_5 }
\end{equation}
We calculate,
$$
\begin{tblr}{}
C^{3}\ev{ \mathbf{x} } - C^{3} \ev{ \sigma_{1} \ev{  \mathbf{x}  }}  \\
=  (x_1 x_2 x_3 + x_2 x_3 x_4 + x_3 x_4 x_5 + x_4 x_5 x_6 + x_5 x_6 x_7 + x_6 x_7 x_8 + x_7 x_8 x_1 + x_8 x_1 x_2) \\
- (x_3 x_4 x_1 + x_4 x_1 x_2 + x_1 x_2 x_7 + x_2 x_7 x_8 + x_7 x_8 x_5 + x_8 x_5 x_6 + x_5 x_6 x_3 + x_6 x_3 x_4) \\
&\hspace{-5cm}  \text{by Definition \ref{def:two_cycles}}  \\
=  (x_1^2 x_3 + x_1 x_3^2 + x_3^2 x_5 + x_3 x_5^2 + x_5^2 x_7 + x_5 x_7^2 + x_1 x_7^2 + x_7 x_1^2) \\
-( x_1 x_3^2 + x_1^2 x_3 + x_1^2 x_7 + x_1 x_7^2 +  x_5 x_7^2 + x_5^2 x_7 + x_3 x_5^2 + x_3^2 x_5) \\
&\hspace{-5cm} \text{by applying $\sigma_0$ on even indices} \\
=  (2x_1^2 x_5 + 2 x_1 x_5^2 + 2 x_5^3  + 2x_1^3) - (2 x_1^2 x_5 + 2 x_1 x_5^2 + 2 x_5^3  + 2 x_1^3) = 0 \\
&\hspace{-5cm}  \text{by Equations \ref{eq:dct_subs_sigma}} 
\end{tblr}$$

On the other hand, 
$$
\begin{tblr}{}
D^{3}\ev{ \mathbf{x} } - D^{3}\ev{ \sigma_{1} \ev{  \mathbf{x}  }} \\
=  (x_6 x_1 x_4  + x_7 x_2 x_5  + x_8 x_3 x_6 + x_1 x_4 x_7  + x_2 x_5 x_8  + x_3 x_6 x_1  + x_4 x_7 x_2  + x_5 x_8 x_3) \\
- (x_8 x_3 x_2  + x_5 x_4 x_7  + x_6 x_1 x_8  + x_3 x_2 x_5  + x_4 x_7 x_6  + x_1 x_8 x_3  + x_2 x_5 x_4  + x_7 x_6 x_1) \\
&\hspace{-5cm}  \text{by Definition \ref{def:two_cycles}} \\ 
=  (x_5 x_1 x_3  + x_7 x_1 x_5  + x_7 x_3 x_5  + x_1 x_3 x_7  + x_1 x_5 x_7  + x_3 x_5 x_1  + x_3 x_7 x_1  + x_5 x_7 x_3) \\
- (x_7 x_3 x_1  + x_5 x_3 x_7  + x_5 x_1 x_7 + x_3 x_1 x_5  +  x_3 x_7 x_5 + x_1 x_7 x_3  + x_1 x_5 x_3  + x_7 x_5 x_1) \\
&\hspace{-5cm}  \text{by applying $\sigma_0$ on even indices}\\
= \ev{ 4 x_1 x_5^2 + 4 x_1^2 x_5    } - \ev{ 4 x_1^2 x_5 + 4 x_1 x_5^2  } = 0 \\
&\hspace{-5cm}  \text{by Equations \ref{eq:dct_subs_sigma}}
\end{tblr}
$$
Therefore,
$$
\begin{tblr}{}
& G_n^{n}\ev{ \mathbf{x} } - G_n^{n}\ev{ \sigma_{1} \ev{  \mathbf{x}  } }  = E_3 \ev{ C^{3} \ev{ \mathbf{x} } - C^{3} \ev{ \sigma_{1} \ev{  \mathbf{x}  }  } } - E_3 \ev{ D^{3} \ev{ \mathbf{x} } - D^{3} \ev{ \sigma_{1} \ev{  \mathbf{x}  } } } \\ 
& = E_3\ev{ 0 } + E_3\ev{ 0 } = 0  
\end{tblr}
$$

\item[c)] If $r=2$, then, by Lemma \ref{lem:helper_cycle}, the assumption $\mathbf{x} = \sigma_0\ev{ \mathbf{x}} = \sigma_1\ev{ \mathbf{x} }$ implies that 
$$\textup{E}_3\ev{ x_1 } =  \textup{E}_3\ev{ x_2 } = \ldots = \textup{E}_3\ev{ x_8 } $$
Therefore, by Equation \ref{eq:G_n_defn} we obtain 
$$ 
G_n^{n}\ev{ \mathbf{x} } - G_n^{n}\ev{ \sigma_{2} \ev{  \mathbf{x}  }} = 0 .
$$
\end{enumerate}

\textbf{Case 2:} $2\leq k\leq n-1$. We apply induction on $r$. For the basis step, assume $r=0$. In particular, we have $k\neq n - r$. 

The graph $G_k^{n}$ is a disjoint union of all-even and all-odd edges. Formally, 
$$ G_k^{n} = p_0 \ev{ G_k^{n-1} } + p_1 \ev{ G_k^{n-1} } $$
Note that $\sigma_0$ maps even and odd edges to each other bijectively:
$$ 
\sigma_0 p_0 \ev{ G_k^{n-1} } = p_1 \ev{ G_k^{n-1} } \qquad \text{ and }\qquad \sigma_0 p_1 \ev{ G_k^{n-1} } = p_0 \ev{ G_k^{n-1} }.
$$ 
Therefore, 
$$
G_k^{n}\ev{\mathbf{x}} - G_k^{n}\ev{\sigma_0\ev{ \mathbf{x} }} = 0
$$

For the inductive step, let $1\leq r \leq n-2$ be given. First, we note that
\begin{align*}
& G_{k}^n\ev{ \mathbf{x} } - G_{k}^{n}\ev{ \sigma_{r} \ev{ \mathbf{x} } }  = G_{k}^n\ev{ \mathbf{x} } - \sigma_{r} G_{k}^{n}\ev{   \mathbf{x}  } \\
& = p_0  G_{k}^{n-1}\ev{ \mathbf{x} } + p_1  G_{k}^{n-1}\ev{ \mathbf{x} } - \sigma_{r}  p_0  G_{k}^{n-1}\ev{ \mathbf{x} } - \sigma_{r}  p_1  G_{k}^{n-1}\ev{ \mathbf{x} } \\
& = p_0  G_{k}^{n-1}\ev{ \mathbf{x} } + p_1  G_{k}^{n-1}\ev{ \mathbf{x} } - p_0  \sigma_{r-1}  G_{k}^{n-1}\ev{ \mathbf{x} } -    p_1  \sigma_{r-1}   G_{k}^{n-1}\ev{ \mathbf{x} }  &&\text{ by Remark \ref{rmk:interaction_sigma_p}}\\
& = p_0  \lb G_{k}^{n-1}\ev{ \mathbf{x} } - \sigma_{r-1} G_{k}^{n-1}\ev{ \mathbf{x} } \rb + p_1  \lb G_{k}^{n-1}\ev{ \mathbf{x} } - \sigma_{r-1} G_{k}^{n-1}\ev{ \mathbf{x} } \rb \\
& = p_0  \lb G_{k}^{n-1}\ev{ \mathbf{x} } -  G_{k}^{n-1}\ev{ \sigma_{r-1} \ev{  \mathbf{x} } } \rb + p_1  \lb G_{k}^{n-1}\ev{ \mathbf{x} } -  G_{k}^{n-1}\ev{ \sigma_{r-1} \ev{  \mathbf{x} } . } \rb 
\end{align*}

\textbf{Case 2.1:} $k = n-r$
\begin{align*}
& G_{n-r}^n\ev{ \mathbf{x} } - G_{n-r}^{n}\ev{ \sigma_{r} \ev{ \mathbf{x} } }  \\
& = p_0  \lb G_{n-r}^{n-1}\ev{ \mathbf{x} } -  G_{n-r}^{n-1}\ev{ \sigma_{r-1} \ev{  \mathbf{x} } } \rb + p_1  \lb G_{n-r}^{n-1}\ev{ \mathbf{x} } -  G_{n-r}^{n-1}\ev{ \sigma_{r-1} \ev{  \mathbf{x} }  } \rb \\
& = p_0  \ev{ f_{r-1}^{n-1}\ev{ \mathbf{x} } } + p_1 \ev{ f_{r-1}^{n-1}\ev{ \mathbf{x} } } && \text{ by inductive hypothesis} \\
& = f_{r}^{n}\ev{ \mathbf{x} } && \text{ by Lemma \ref{lem:even_odd_f}.}
\end{align*}

\textbf{Case 2.2:} $k\neq n-r$
\begin{align*}
& G_{k}^n\ev{ \mathbf{x} } - G_{k}^{n}\ev{ \sigma_{r} \ev{ \mathbf{x} } }  \\
& = p_0  \lb G_{k}^{n-1}\ev{ \mathbf{x} } -  G_{k}^{n-1}\ev{ \sigma_{r-1} \ev{  \mathbf{x} } } \rb + p_1  \lb G_{k}^{n-1}\ev{ \mathbf{x} } -  G_{k}^{n-1}\ev{ \sigma_{r-1} \ev{  \mathbf{x} }  } \rb \\
& = p_0 \ev{ 0 } + p_1 \ev{ 0 } = 0&& \text{ by inductive hypothesis.} 
\end{align*}
\end{proof}


\begin{lemma}
\label{lem:sigma_general_version}
Let $\mathbf{x}\in \C^{N+1}$ such that $\theta_n\ev{\mathbf{x}} = \mathbf{x}$ and $\mathbf{x} = \sigma_i\ev{ \mathbf{x} }$ for each $i=-1,0,\ldots, r-1$, where $0\leq r \leq n-2$ is fixed. Then,
$$ X^n\ev{ \mathbf{x} } - X^n\ev{ \sigma_{r}\ev{\mathbf{x}} }  = f_r^{n}\ev{ \mathbf{x} } $$
\end{lemma}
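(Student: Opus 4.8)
The plan is to decompose $X^n$ into its constituents as in Definition \ref{def:final_defn}: $X^n = \Gamma_n + M^n_0 = \sum_{k=2}^{n} G_k^{n} + M^n_0$. Then
$$ X^n(\mathbf{x}) - X^n(\sigma_r(\mathbf{x})) \;=\; \sum_{k=2}^{n} \Big( G_k^{n}(\mathbf{x}) - G_k^{n}(\sigma_r(\mathbf{x})) \Big) \;+\; \Big( M^n_0(\mathbf{x}) - M^n_0(\sigma_r(\mathbf{x})) \Big), $$
and it remains to evaluate the two groups of terms. Since Lemma \ref{lem:induction_cycles} does the heavy lifting, the argument is short.

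For the sum over $k$, observe that the hypotheses of the present lemma are exactly those of Lemma \ref{lem:induction_cycles}, so I apply it to each summand. Because $0 \le r \le n-2$, the integer $n-r$ belongs to $\{2,\dots,n\}$; hence precisely one summand — the one with $k=n-r$ — equals $f_r^{n}(\mathbf{x})$, and all the others vanish. Therefore $\sum_{k=2}^{n}\big(G_k^{n}(\mathbf{x}) - G_k^{n}(\sigma_r(\mathbf{x}))\big) = f_r^{n}(\mathbf{x})$.

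For the last term I claim $M^n_0$ is invariant under the substitution $\sigma_r$, i.e.\ $M^n_0(\mathbf{x}) = M^n_0(\sigma_r(\mathbf{x}))$, so these two terms cancel. Indeed, $M^n_0 = x_0\,\textup{E}_2^{n}(x_1 x_2 + x_3 x_4)$: the vertex $0$ is fixed by $\sigma_r$; for $r \ge 2$, Remark \ref{rmk:sigma_exceeds_threshold} gives $\textup{E}_2^{n}(x_i) = \textup{E}_2^{n}(\sigma_r(x_i))$ for every index $i$ (as $\sigma_r$ shifts an index by $\pm 2^{r}$ with $r \ge 2$), so $\textup{E}_2^{n}(x_1 x_2 + x_3 x_4)$ is fixed; for $r = 0$, $\sigma_r$ transposes $x_1 \leftrightarrow x_2$ and $x_3 \leftrightarrow x_4$ within the residue classes modulo $4$, and for $r = 1$ it interchanges the blocks $\{x_1,x_2\}$ and $\{x_3,x_4\}$ — in both cases $x_1 x_2 + x_3 x_4$, and hence its image under the homomorphism $\textup{E}_2^{n}$, is preserved. (Equivalently, $\sigma_r$ is an automorphism of the hypergraph $M^n_0$, since it fixes $0$ and, for each $s$, permutes the edges $\{0,1+4s,2+4s\}$ and $\{0,3+4s,4+4s\}$ among the edges of $M^n_0$.)

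Putting the two computations together yields $X^n(\mathbf{x}) - X^n(\sigma_r(\mathbf{x})) = f_r^{n}(\mathbf{x})$. The only step needing a little attention is the $\sigma_r$-invariance of $M^n_0$ in the cases $r = 0,1$, where the standing hypothesis $\sigma_r(\mathbf{x}) = \mathbf{x}$ is \emph{not} available and one must instead invoke the combinatorial symmetry of $M^n_0$ itself; everything else is bookkeeping on top of Lemma \ref{lem:induction_cycles}.
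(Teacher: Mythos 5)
Your proposal is correct and follows essentially the same route as the paper's proof: the same decomposition $X^n = \sum_{k=2}^{n} G_k^{n} + M^n_0$, the same appeal to Lemma \ref{lem:induction_cycles} to isolate the single surviving term $k = n-r$, and the same three-way case analysis for the $\sigma_r$-invariance of $M^n_0$ (Remark \ref{rmk:sigma_exceeds_threshold} for $r \geq 2$, the combinatorial symmetry of $x_1x_2 + x_3x_4$ for $r = 0,1$). Your closing observation that for $r=0,1$ one cannot use $\sigma_r(\mathbf{x}) = \mathbf{x}$ and must instead use the symmetry of $M^n_0$ itself is exactly the point the paper's computation relies on.
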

\begin{proof}
First, we show that
$$M^n_0 \ev{ \mathbf{x} } - M^n_0 \ev{ \sigma_r\ev{ \mathbf{x} } } = 0$$
Recall that 
$$ M^n_0  = x_0 \textup{E}_2 \ev{ x_1 x_2 + x_3 x_4}$$
By Remark \ref{rmk:sigma_exceeds_threshold}, we have $ \text{E}^{n}_2  \ev{ x_i } =  \text{E}^{n}_2  \ev{ \sigma_t\ev{ x_i } } $ whenever $t\geq 2$. Hence, we only need to consider $r = 0,1$: 
\begin{align*}
& M^n_0 \ev{ \mathbf{x} } - M^n_0 \ev{ \sigma_0\ev{ \mathbf{x} } } = x_0  \textup{E}_2 \ev{ x_1 x_2 + x_3 x_4}  - x_0  \textup{E}_2 \ev{ x_2 x_1 + x_4 x_3}  = 0  \\
& M^n_0 \ev{ \mathbf{x} } - M^n_0 \ev{ \sigma_1\ev{ \mathbf{x} } } = x_0  \textup{E}_2 \ev{ x_1 x_2 + x_3 x_4}  - x_0  \textup{E}_2 \ev{ x_3 x_4 + x_1 x_2}  = 0  
\end{align*}
Therefore,
\begin{align*}
& X^n\ev{ \mathbf{x} } - X^n\ev{ \sigma_{r}\ev{\mathbf{x}} }  = M^n_0 \ev{ \mathbf{x} } - M^n_0 \ev{ \sigma_{r}\ev{\mathbf{x}} } + \sum_{k = 2}^{n} G_k^{n} \ev{ \mathbf{x} } - G_k^{n} \ev{  \sigma_{r}\ev{\mathbf{x}} }  \\
& = 0 + G_{n-r}^{n} \ev{ \mathbf{x} } - G_{n-r}^{n} \ev{  \sigma_{r}\ev{\mathbf{x}} }  = f_r^{n}\ev{ \mathbf{x} } & \hspace{-3cm}\text{by Lemma \ref{lem:induction_cycles} } 
\end{align*}
\end{proof}

Next, we compare the neighborhoods of $1$ and $1+2^r$, for each $r=0,1,\ldots, n-3$. First, we have some definitions and helper lemmas:
\begin{defn}
Let $\mathcal{H} = \ev{[n],E\ev{ \mathcal{H} } }$ be a hypergraph, of rank $m\geq 2$. For each $i\in [n]$, the \textit{link} of $i$ is defined as the partial derivative of the Lagrangian of $\mathcal{H}$:
$$  \mathcal{H}[i] = \dfrac{\partial}{ \partial x_i } \mathcal{H}\ev{ \mathbf{x} } $$
Note that the link $ \mathcal{H}[i]$ is a hypergraph of rank $m-1$, for each $i=1,\ldots,n$. Let $\text{deg}_\mathcal{H}\ev{i}$ be the number of edges in $\mathcal{H}[i]$. In other words, $\textup{deg}_\mathcal{H}\ev{i} = \mathcal{H}[i]\ev{ \mathbbm{1} }$. Taking the link of a vertex is a commutative operation:
$$ \mathcal{H}[i,j] \underset{\textup{def}}{=} \mathcal{H}[i][j] = \mathcal{H}[j][i] \text{ for each pair } i\neq j $$
Let $\textup{codeg}_\mathcal{H}\ev{i,j}$ be the number of edges in $\mathcal{H}[i,j]$. Equivalently, $\textup{codeg}_\mathcal{H}\ev{i,j} = \mathcal{H}[i,j]\ev{ \mathbbm{1} }$.
\end{defn}

\begin{lemma}
\label{lem:helper_parity}
Let $i\in V_{n-1}$ and $1\leq r \leq n-3$ be fixed. Let $\bm{\epsilon}\in \{0,1\}^{r}$ be a zero-one vector. Then, we have 
\begin{enumerate}
\item $p^{n}_{\bm{\epsilon}}\ev{ i } = 1  \text{ if and only if } i \equiv 1 \pmod{2^{n-r}}\text{ and } \bm{\epsilon} = \mathbbm{1}_r$
\item $p^{n}_{\bm{\epsilon}}\ev{ i } = 2  \text{ if and only if } i \equiv 1 \pmod{2^{n-r}} \text{ and } \bm{\epsilon} = \ev{ 0, \smalloverbrace{1,\ldots,1}^{\text{\clap{$r-1$  many}}}} $
\end{enumerate}

\end{lemma}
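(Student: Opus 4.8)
The plan is to prove Lemma~\ref{lem:helper_parity} by a direct computation using the closed form for $p^{n}_{\bm\epsilon}$ recorded in Remark~\ref{rmk:repeated_app}, namely $p^{n}_{\bm\epsilon}(i) = 2^{r} i - \sum_{j=0}^{r-1}\epsilon_j 2^{j}$, all computed modulo $2^{n}$ (with the representative taken in $V_{n}=\{1,\dots,2^{n}\}$). First I would record that $\sum_{j=0}^{r-1}\epsilon_j 2^{j}$ ranges over the integers $0,1,\dots,2^{r}-1$, and that it equals $2^{r}-1$ exactly when $\bm\epsilon=\mathbbm1_r$ and $2^{r}-2$ exactly when $\bm\epsilon=(0,1,\dots,1)$. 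So the congruence $p^{n}_{\bm\epsilon}(i)\equiv c \pmod{2^{n}}$ becomes $2^{r}i \equiv c + s(\bm\epsilon) \pmod{2^{n}}$ where $s(\bm\epsilon)=\sum_j \epsilon_j 2^{j}\in\{0,\dots,2^{r}-1\}$.

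For part~1, take $c=1$. Since $1\le r\le n-3$, we have $2^{r}\mid 2^{n}$, so the congruence $2^{r} i\equiv 1 + s(\bm\epsilon)\pmod{2^{n}}$ can only be solvable if $2^{r}\mid 1+s(\bm\epsilon)$; as $0\le s(\bm\epsilon)\le 2^{r}-1$, this forces $1+s(\bm\epsilon)=2^{r}$, i.e.\ $s(\bm\epsilon)=2^{r}-1$, i.e.\ $\bm\epsilon=\mathbbm1_r$. Given that, the congruence reduces to $2^{r} i\equiv 2^{r}\pmod{2^{n}}$, equivalently $i\equiv 1 \pmod{2^{n-r}}$, which is the stated condition. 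Conversely, if $\bm\epsilon=\mathbbm1_r$ and $i\equiv 1\pmod{2^{n-r}}$ then $p^{n}_{\bm\epsilon}(i)\equiv 2^{r}i-(2^{r}-1)\equiv 2^{r}-2^{r}+1=1\pmod{2^{n}}$, and since $1\in V_n$ is its own representative, $p^{n}_{\bm\epsilon}(i)=1$. Part~2 is the same computation with $c=2$: now $2^{r}\mid 2+s(\bm\epsilon)$ with $0\le s(\bm\epsilon)\le 2^{r}-1$ forces $2+s(\bm\epsilon)=2^{r}$ (using $r\ge 1$ so that $2^{r}\ge 2$), i.e.\ $s(\bm\epsilon)=2^{r}-2$, i.e.\ $\bm\epsilon=(0,1,\dots,1)$, and then $2^{r}i\equiv 2^{r}\pmod{2^{n}}$ again gives $i\equiv 1\pmod{2^{n-r}}$; the converse direction is identical.

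The only genuine subtlety — and the one I would be careful to state explicitly — is the interplay of two edge cases: we must have $2^{r}<2^{n}$ for the divisibility argument ``$2^{r}\mid 2^{n}$ and $2^{r}\mid c+s$'' to pin down $c+s$ uniquely as the single multiple of $2^{r}$ in the range $\{c,\dots,c+2^{r}-1\}$, and in part~2 we additionally need $2^{r}\ge 2$ so that $c=2$ is not already a multiple of $2^{r}$ on its own (which would wrongly allow $s=0$); both are guaranteed by the hypothesis $1\le r\le n-3$. I expect no real obstacle beyond bookkeeping the residues correctly; the proof is a short modular-arithmetic argument once Remark~\ref{rmk:repeated_app} is invoked.
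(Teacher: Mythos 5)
Your proposal is correct and follows essentially the same route as the paper's proof: both invoke Remark~\ref{rmk:repeated_app}, observe that $c+\sum_j\epsilon_j 2^j$ must be divisible by $2^r$ and hence equal to $2^r$ (the paper phrases this via the binary representation of $i$, you via divisibility, which is the same observation), and then verify the converse by direct substitution. No gap; the minor caveat you raise about $c=2$ being a multiple of $2^r$ when $r=1$ is harmless, since in that case $s=0=2^r-2$ is exactly the claimed value.
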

\begin{proof}
Let $ i = \sum_{j=0}^{n-1} 2^j b_j(i) $ be the binary representation of $i$, for each $i\in V_{n-1}$. By Remark \ref{rmk:repeated_app}, we have
$$ p_{\bm{\epsilon}}(i) = 2^{r} i  - \sum_{j = 0}^{r-1} \epsilon_j 2^{j} 
$$
In other words, 
$$ 
\sum_{j=r}^{n-1+r} 2^{j} b_j(i)  = p_{\bm{\epsilon}}(i) + \sum_{j = 0}^{r-1} \epsilon_j 2^{j} 
$$

\begin{enumerate}
\item Assume $p_{\bm{\epsilon}}\ev{ i } = 1$. The equation above implies that $\epsilon_0 = \epsilon_1 = \ldots = \epsilon_{r-1} = 1$. Then, we obtain $ 2^{r} i  = 2^{r} \pmod{2^n}$, which implies $i \equiv 1 \pmod{2^{n-r}}$.

Conversely, for each $1\leq s\leq 2^{n-r}$, we have
$$ 
p_{\mathbbm{1}} \ev{ 1 + s \cdot 2^{n-r}} = 2^{r} \ev{ 1 + s \cdot 2^{n-r}} - \ev{ 2^{r} - 1 } = 1 + s \cdot 2^n \equiv 1 \pmod{2^n} 
$$

\item Assume $p_{\bm{\epsilon}}\ev{ i } = 2$. In this case, we obtain $\epsilon_0=0$ and $ \epsilon_1 = \ldots = \epsilon_{r-1} = 1$. Then, we obtain $ 2^{r} i  = 2^{r} \pmod{2^n}$, which implies $i \equiv 1 \pmod{2^{n-r}}$. 

Conversely, for each $1\leq s\leq 2^{n-r}$, we get
\begin{align*}
&p_{\bm{\epsilon}} \ev{ 1 + s \cdot 2^{n-r} } = p_0 p_{\mathbbm{1}_{r-1}} \ev{ 1 + s \cdot 2^{n-r}} = p_0 \ev{ 2^{r-1} \ev{ 1 + s \cdot 2^{n-r}} - \ev{ 2^{r-1} - 1 } } \\
& = p_0 \ev{ 1 + s \cdot 2^{n-1} } = 2+s\cdot 2^n \equiv 2 \pmod{2^n} 
\end{align*}
\end{enumerate}
\end{proof}

\begin{lemma}
\label{lem:induction_neigh}
Let $0\leq r \leq n-2$ and $2\leq k \leq n$ be fixed. Let $\mathbf{x}\in \C^{N+1}$ such that $\theta_n\ev{\mathbf{x}} = \mathbf{x}$ and $\mathbf{x} = \sigma_i\ev{ \mathbf{x} }$ for each $i=0,\ldots, r$, where $r\geq 0$. Then,
$$ G_k^{n}[1]\ev{ \mathbf{x} } - G_k^{n}[1+2^r]\ev{ \mathbf{x} } = \begin{cases} \lb \textup{E}_{r+3} \ev{  x_1 - x_{ 1 + 2^{r+2} } } \rb^2 & \text{ if } k = n-r \\ 0 & \text{ if }  k\neq n-r
\end{cases} $$
\end{lemma}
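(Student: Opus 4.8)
The plan is to imitate the proof of Lemma~\ref{lem:induction_cycles}, with ``evaluate $G_k^n$ at $\sigma_r(\mathbf{x})$'' replaced by ``differentiate and compare the links at vertices $1$ and $1+2^r$''. As there, the argument splits into the case $k=n$ (handled directly from $G_n^n=\textup{E}_3(G_3^3)$) and the case $2\le k\le n-1$ (handled by induction on $r$ through $G_k^n=p_0(G_k^{n-1})+p_1(G_k^{n-1})$). The mechanics are that, since $p_1$ and $p_0$ are the algebra endomorphisms $x_i\mapsto x_{2i-1}$ and $x_i\mapsto x_{2i}$, differentiating $p_0(Q)+p_1(Q)$ in an odd-indexed variable annihilates the $p_0$-summand and obeys $\partial_{x_1}p_1(Q)=p_1(\partial_{x_1}Q)$, $\partial_{x_{1+2^r}}p_1(Q)=p_1(\partial_{x_{1+2^{r-1}}}Q)$ for $r\ge1$, and $\partial_{x_2}p_0(Q)=p_0(\partial_{x_1}Q)$; moreover $\sigma_0 p_1=p_0$ and $\sigma_0 p_0=p_1$ as endomorphisms, so $G_k^n$ is $\sigma_0$-invariant as a polynomial for $2\le k<n$, and combined with Remark~\ref{rmk:interaction_sigma_p} and Lemma~\ref{lem:odd_even_eight} this lets every $p$-map be moved past the $\sigma$'s and $\textup{E}$'s.

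For $k=n$: since $\textup{E}_3=\textup{E}^n_3$ is $2^3$-periodic in its index, the links $G_n^n[1]$ and $G_n^n[1+2^r]$ literally agree once $r\ge3$; for $r\in\{1,2\}$, Lemma~\ref{lem:helper_cycle}(3) gives $\textup{E}_3(x_1)(\mathbf{x})=\cdots=\textup{E}_3(x_8)(\mathbf{x})$, and vertex-transitivity of $G_3^3=C^3+D^3$ then forces $\textup{E}_3(G_3^3[1])(\mathbf{x})=\textup{E}_3(G_3^3[1+2^r])(\mathbf{x})$, so the difference is $0$ (and indeed $k\ne n-r$ there). The substantive case is $r=0$ (where $k=n-r$): here $G_n^n[1]-G_n^n[2]=\textup{E}_3(G_3^3[1]-G_3^3[2])$, and one writes $G_3^3[1]-G_3^3[2]$ out as an explicit quadratic, applies $\textup{E}_3$, and uses the $\theta_n$- and $\sigma_0$-relations of Lemma~\ref{lem:helper_cycle}(1)--(2) to collapse it to $(\textup{E}_3(x_1-x_5))^2$, in direct analogy with Equations~\eqref{eq:dct_subs_theta} and the factorization in case~(a) of Lemma~\ref{lem:induction_cycles}.

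For $2\le k\le n-1$ I induct on $r$. When $r=0$ one has $k\ne n=n-r$, and the $\sigma_0$-invariance of $G_k^n$ yields $G_k^n[1]=\sigma_0(G_k^n[2])$, hence $G_k^n[1](\mathbf{x})=G_k^n[2](\sigma_0\mathbf{x})=G_k^n[2](\mathbf{x})$, so the difference is $0$. When $r\ge1$, both $1$ and $1+2^r$ are odd, so the differentiation rules give
$$G_k^n[1]-G_k^n[1+2^r]=p_1\!\big(G_k^{n-1}[1]-G_k^{n-1}[1+2^{r-1}]\big);$$
invoking the case $(n-1,r-1)$ of the Lemma (so $k=(n-1)-(r-1)$ precisely when $k=n-r$) and pushing $p_1$ through $\textup{E}_{r+2}$ via Lemma~\ref{lem:odd_even_eight} turns the nonzero value $\big(\textup{E}^{n-1}_{r+2}(x_1-x_{1+2^{r+1}})\big)^2$ into $\big(\textup{E}^n_{r+3}(x_1-x_{1+2^{r+2}})\big)^2$, while a zero value stays $0$. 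This descent bottoms out when the dimension reaches $k$ (or $3$, when $k=2$), where the $k=n$ analysis applies at the reduced dimension.

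I expect two points to need the most care. The first is the explicit quadratic collapse in the case $k=n$, $r=0$, which is the analogue of the heart of Lemma~\ref{lem:induction_cycles}. The second is that the inductive invocation formally requires $\theta_{n-1}(\mathbf{x})=\mathbf{x}$ (and, at the bottom of the descent, $\theta_k(\mathbf{x})=\mathbf{x}$ feeding the $k=n$ analysis at the reduced dimension), which is not literally among the hypotheses; it should be recovered from $\theta_n(\mathbf{x})=\mathbf{x}$ together with $\sigma_0(\mathbf{x})=\cdots=\sigma_{r-1}(\mathbf{x})=\mathbf{x}$ by conjugating the reflections $\theta_m$ at different scales through the $\sigma_j$'s --- equivalently, by unwinding the recursion fully as $G_k^n=\sum_{\bm{\epsilon}}p_{\bm{\epsilon}}(G_k^k)$ and invoking Lemma~\ref{lem:helper_parity}, so that one only ever needs $\theta_k$-symmetry of the single vector $p_{\mathbbm{1}}(\mathbf{x})$ --- and making that reduction precise is the principal non-computational obstacle.
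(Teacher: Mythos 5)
Your proposal matches the paper's proof in structure and substance: the same split into $k=n$ (explicit computation from $\textup{E}_3\ev{G_3^3}$ using Lemma~\ref{lem:helper_cycle}) and $2\le k\le n-1$ (induction on $r$ via $G_k^n=p_0\ev{G_k^{n-1}}+p_1\ev{G_k^{n-1}}$, with the $p_0$-summand killed by odd-index differentiation and $p_1$ pushed through $\textup{E}_{r+2}$ by Lemma~\ref{lem:odd_even_eight}); your $\sigma_0$-invariance shortcut for the $r=0$ base case is a mild streamlining of the paper's explicit neighborhood computation via Lemma~\ref{lem:helper_parity}, not a different route. The $\theta_{n-1}$-symmetry issue you flag is genuine but resolves exactly as you suggest --- $\theta_n\ev{\mathbf{x}}=\mathbf{x}$ together with $\sigma_0\ev{\mathbf{x}}=\mathbf{x}$ gives $x_{2i-1}=x_{2^n-2i+2}=x_{2^n-2i+1}$, i.e.\ $\theta_{n-1}$-symmetry of the odd-indexed pullback vector fed to the inductive hypothesis --- a point the paper's own inductive step leaves implicit.
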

\begin{proof}
\textbf{Case 1:} $k=n$. In this case, we would like to show that
$$G^{n}_n[1] \ev{ \mathbf{x} } - G^{n}_n[1+2^r] \ev{ \mathbf{x} } = \begin{cases} \lb \textup{E}_{3} \ev{  x_1 - x_{ 5 } } \rb^2  & \text{ if } r=0 \\ 0 & \text{ if } 1\leq r \end{cases}$$

\begin{enumerate}
\item[a)] First, assume $r=0$. By Lemma \ref{lem:helper_cycle}, Part 2, we have
\begin{equation}
\label{eq:to_be_used}
\textup{E}_3\ev{ x_7 } = \textup{E}_3\ev{ x_1 } \qquad \text{and} \qquad \textup{E}_3\ev{ x_3 } = \textup{E}_3\ev{ x_5 }
\end{equation}

By Definition \ref{def:cycles}, we have
$$
\begin{tblr}{}
G^{n}_n[1] \ev{ \mathbf{x} } -  G^{n}_n[2] \ev{\mathbf{x} } \\
=  \textup{E}_{3} \ev{x_7 x_8 + x_8 x_2 + x_2 x_3 + x_6  x_3 + x_4 x_7 + x_4 x_6} \\
\qquad - \textup{E}_{3} \ev{x_8  x_1 + x_1 x_3 + x_3 x_4 + x_7 x_4 + x_5 x_8 + x_5 x_7}  \\
= \textup{E}_{3} \ev{x_7 x_7 + x_7 x_1 + x_1 x_3 + x_5  x_3 + x_3 x_7 + x_3 x_5} \\
\qquad - \textup{E}_{3} \ev{x_7  x_1 + x_1 x_3 + x_3 x_3 + x_7 x_3 + x_5 x_7 + x_5 x_7} \\
&\hspace{-4cm} \text{by $\sigma_0$ applied to even indices}\\
= \textup{E}_{3} \ev{x_7 x_7 + 2 x_5 x_3 } - \textup{E}_{3} \ev{ x_3 x_3 + 2 x_5 x_7 } \\
= \textup{E}_{3} \ev{x_1^2 + 2 x_5^2 } - \textup{E}_{3} \ev{ x_5^2 + 2 x_5 x_1 } & \hspace{-3cm}\text{by Equations \ref{eq:to_be_used}} \\ 
= \lb \textup{E}_3 \ev{  x_1-x_5  } \rb^{2} & \hspace{-4cm}\text{as $\textup{E}_3$ is a ring homomorphism}
\end{tblr} $$

\item[b)] Now, we assume $1\leq r \leq n-2$. By Lemma \ref{lem:helper_cycle},
$$ \textup{E}_3\ev{ x_i } = \textup{E}_3\ev{ x_j }$$
for any $1\leq i < j\leq 8$. Therefore, by Definition \ref{def:cycles}, we have
$$ 
G_{n}^n[1] = G_n^{n}[1+2^r] = 0.
$$
\end{enumerate}

\noindent \textbf{Case 2:} $2\leq k \leq n-1$. Now, we apply induction on $r$. For the base case, assume $r=0$. In particular, we have $k\neq n - r$. In this case, we would like to show that 
$$ G^{n}_k[1] \ev{ \mathbf{x} } - G^{n}_k[2] \ev{ \mathbf{x} } = 0 $$

\noindent \textbf{Case 2a:} $k = 2$

We describe the neighborhoods of $1$ and $2$. Let $e\in E\ev{ G_{2}^{n} }$ be an edge. By Remark \ref{rmk:rec_defn} Part 1, we have $\mathbf{x}^{e} = p_{\bm{\epsilon}} \ev{ x_i x_{i+2} x_{i+4} }$, for some $\bm{\epsilon}\in \{0,1\}^{n-3}$ and some $1\leq i\leq 4$.  If $1\in e$, then, by Lemma \ref{lem:helper_parity}, we have $\bm{\epsilon} = \mathbbm{1}_{n-3}$ and $i = 1$. Hence,
$$ 
\mathbf{x}^{e} = p_{\mathbbm{1}_{n-3}} \ev{ x_1 x_3 x_5 } = x_1 x_{3\cdot 2^{n-3} - \ev{ 2^{n-3} - 1}} x_{5\cdot 2^{n-3} - \ev{ 2^{n-3} - 1}} = x_1 x_{2^{n-2}+1} x_{2^{n-1}+1} 
$$
If $2\in e$, then, by Lemma \ref{lem:helper_parity}, we have $\bm{\epsilon} = \ev{ 0, \smalloverbrace{1,\ldots,1}^{\text{\clap{$n-4$  many}}}}$ and $i = 1$. Hence,
$$ 
\mathbf{x}^{e}  = p_0 p_{ \mathbbm{1}_{n-4} }  \ev{ x_1 x_3 x_5 } = p_0 \ev{ x_1 x_{2^{n-3}+1} x_{2^{n-2}+1} } = x_2 x_{ 2^{n-2} +2 } x_{2^{n-1} +2} 
$$
Using the assumption that $\sigma_0\ev{\mathbf{x}} = \mathbf{x}$, we conclude, 
$$ G^{n}_2[1] \ev{ \mathbf{x} } - G^{n}_2[2] \ev{ \mathbf{x} } = x_{2^{n-2}+1} x_{2^{n-1}+1} -  x_{2^{n-2}+2} x_{2^{n-1}+2} = 0$$

\noindent \textbf{Case 2b:} $3\leq k \leq n-1$

Let $e\in E\ev{ G_{k}^{n} }$ be an edge. By Remark \ref{rmk:rec_defn} Part 2, we have 
$$ \mathbf{x}^{e} = p_{ \bm{\epsilon} } \ev{ x_{j_1} x_{j_2} x_{j_3} } $$
for some edge $\{j_1,j_2,j_3\}\in E\ev{ G_k^k }$ with $j_1<j_2<j_3$ and some zero-one vector $\bm{\epsilon} \in \{0,1\}^{ n-k } $.

\begin{itemize}
\item If $1\in e$, then, by Lemma \ref{lem:helper_parity}, we have $\bm{\epsilon} = \mathbbm{1}_{n-k}$ and $ j_1 \equiv 1 \pmod{2^k}$. Since $\{j_1,j_2,j_3\}\in E\ev{ G_k^k }$, it follows that $j_1 = 1$. Furthermore, $\{1,j_2,j_3\}\in E\ev{ G_k^k }$ implies that 
$$ 
j_2 = i_2 + 8s_2 \pmod{V_k} \qquad \text{ and } \qquad j_3 = i_3 + 8s_3 \pmod{V_k}
$$
where $1\leq i_2,i_3 \leq 8$, $1\leq s_2,s_3\leq 2^{k-3}$ and $\{1,i_2,i_3\}\in G_3^{3}$. Hence,
$$ \mathbf{x}^{e} =  p_{\mathbbm{1}_{n-k}} \ev{ x_1 x_{i_2 + 8s_2} x_{i_3 + 8s_3}} $$ 
For each $i$ and $s$, we have 
$$ p_{ \mathbbm{1}_{n-k} } \ev{i + 8s } =  2^{n-k}\ev{ i + 8s } - \ev{ 2^{n-k} - 1 } = 2^{n-k}\ev{ i + 8 s -1 } + 1$$

Therefore,
\begin{align*}
& G_{k}^{n}[1] \ev{\mathbf{x}}  = \sum_{ \{1,i_2,i_3\}\in E\ev{ G_3^3 } } \sum_{   1\leq s_2,s_3\leq 2^{k-3}}  x_{ 2^{n-k}\ev{ i_2 + 8 s_2 -1 } + 1 }  x_{ 2^{n-k}\ev{ i_3 + 8 s_3 -1 } + 1 } 
\end{align*}

\item Similarly, if $2\in e$, then, by Lemma \ref{lem:helper_parity}, we have $\bm{\epsilon} = \ev{ 0, \smalloverbrace{1,\ldots,1}^{\text{\clap{$n-k-1$  many}}}}$ and $ j_1 \equiv 1 \pmod{2^k}$. This implies, as before, that
$$ \mathbf{x}^{e} = p_{\bm{\epsilon} } \ev{ x_1 x_{i_2 + 8s_2} x_{i_3 + 8s_3} } $$
for some $1\leq i_2,i_3 \leq 8$ and some $1\leq s_2,s_3\leq 2^{k-3}$, with $\{1,i_2,i_3\}\in G_3^{3}$. For each $i$ and $s$, we have 
$$ p_{ \bm{\epsilon} } \ev{ i + 8s } =  2^{n-k}\ev{ i + 8s } - \ev{ 2^{n-k} - 2 }= 2^{n-k}\ev{ i + 8 s -1 } + 2$$

Therefore,
\begin{align*}
& G_{k}^{n}[2] \ev{\mathbf{x}}  = \sum_{ \{1,i_2,i_3\}\in E\ev{ G_3^3 } } \sum_{   1\leq s_2,s_3\leq 2^{k-3}}  x_{ 2^{n-k}\ev{ i_2 + 8 s_2 -1 } + 2 }  x_{ 2^{n-k}\ev{ i_3 + 8 s_3 -1 } + 2 }
\end{align*}
\end{itemize}

Using the assumption that $\sigma_0\ev{\mathbf{x}} = \mathbf{x}$, we obtain 
$$ 
G^{n}_k[2] \ev{ \mathbf{x} } = \sigma_0\ev{ G_{k}^{n}[1] \ev{\mathbf{x}} } = G^{n}_k[1] \ev{ \sigma_0\ev{ \mathbf{x} } } =  G^{n}_k[1] \ev{ \mathbf{x} }
$$
For the inductive step, assume $1\leq r \leq n-2$. First, note that
\begin{align*}
& G_{k}^n[1] \ev{ \mathbf{x} } - G_k^{n}[1+2^r]\ev{ \mathbf{x} }  \\
& = p_1  G_{k}^{n-1}[1] \ev{ \mathbf{x} } -  p_1  G_k^{n-1}[1+2^{r-1}] \ev{ \mathbf{x} } \\
& = p_1  \ew{ G_{k}^{n-1}[1] \ev{ \mathbf{x} } - G_k^{n-1}[1+2^{r-1}] \ev{ \mathbf{x} } }  
\end{align*}
If $k = n-r$, then
\begin{align*}
& G_{k}^n[1] \ev{ \mathbf{x} } - G_k^{n}[1+2^r]\ev{ \mathbf{x} }  \\
& = p_1  \lb \textup{E}_{r+2} \ev{  x_1 - x_{ 1 + 2^{r+1} } } \rb^2 && \text{ by the inductive hypothesis} \\ 
& = \lb \textup{E}_{r+3} \ev{  x_1 - x_{ 1 + 2^{r+2} } } \rb^2 && \text{ by Lemma \ref{lem:odd_even_eight}.}
\end{align*}
If $k\neq n-r$, then
\begin{align*}
& G_{k}^n[1] \ev{ \mathbf{x} } - G_k^{n}[1+2^r]\ev{ \mathbf{x} } = p_1 \ev{ 0 } = 0 && \text{ by the inductive hypothesis.} 
\end{align*}
\end{proof}

\begin{lemma}
\label{lem:neigh_general_version}
Let $\mathbf{x}\in \C^{N+1}$ such that $\theta_n\ev{\mathbf{x}} = \mathbf{x}$ and $\mathbf{x} = \sigma_i\ev{ \mathbf{x} }$ for each $i=0,\ldots, r$, where $0 \leq r \leq n-3$. Then,
\begin{align*}
& X^n[1] \ev{ \mathbf{x} } - X^n[1+2^r] \ev{ \mathbf{x} } = \lb \textup{E}_{r+3} \ev{  x_1 - x_{ 1 + 2^{r+2} } } \rb^2
\end{align*}
\end{lemma}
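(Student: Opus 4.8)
The plan is to decompose $X^n = \Gamma_n + M^n_0 = \sum_{k=2}^n G_k^n + M^n_0$ and take the link at vertices $1$ and $1+2^r$, then match terms with Lemma \ref{lem:induction_neigh}. First I would handle the $M^n_0$ contribution: since $M^n_0 = x_0 \textup{E}_2(x_1 x_2 + x_3 x_4)$, the vertices $1$ and $1+2^r$ occur symmetrically enough that $M^n_0[1]\ev{\mathbf{x}} = M^n_0[1+2^r]\ev{\mathbf{x}}$ under the hypotheses $\theta_n\ev{\mathbf{x}} = \mathbf{x}$ and $\sigma_i\ev{\mathbf{x}} = \mathbf{x}$ for $i = 0,\ldots,r$. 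Concretely, for $r=0$ the link of $M^n_0$ at $1$ is $x_0\textup{E}_2(x_2)$ (times a constant from $\textup{E}_2$ acting on $x_1$) and at $2$ is $x_0\textup{E}_2(x_1)$, which agree because $\sigma_0\ev{\mathbf{x}}=\mathbf{x}$ forces $\textup{E}_2(x_1) = \textup{E}_2(x_2)$; for $r\geq 1$, vertex $1+2^r$ does not lie in any edge of $M^n_0$ whose remaining vertices survive, and by Remark \ref{rmk:sigma_exceeds_threshold} together with the $\sigma_i$-invariance the two links still coincide (or vanish appropriately). So the $M^n_0$ term drops out of the difference.

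Next I would apply Lemma \ref{lem:induction_neigh} termwise: for each $k$ with $2\leq k\leq n$, the hypotheses of Lemma \ref{lem:neigh_general_version} (namely $\theta_n\ev{\mathbf{x}} = \mathbf{x}$ and $\sigma_i\ev{\mathbf{x}} = \mathbf{x}$ for $i=0,\ldots,r$) are exactly the hypotheses needed to invoke Lemma \ref{lem:induction_neigh}, and it tells us
$$ G_k^n[1]\ev{\mathbf{x}} - G_k^n[1+2^r]\ev{\mathbf{x}} = \begin{cases} \lb \textup{E}_{r+3}\ev{x_1 - x_{1+2^{r+2}}}\rb^2 & k = n-r \\ 0 & k\neq n-r. \end{cases} $$
Since $0\leq r\leq n-3$, the index $k = n-r$ satisfies $3\leq k\leq n$, so it genuinely appears in the sum $\sum_{k=2}^n$. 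Summing over $k$ and adding the (zero) contribution from $M^n_0$ yields
$$ X^n[1]\ev{\mathbf{x}} - X^n[1+2^r]\ev{\mathbf{x}} = \sum_{k=2}^n \lp G_k^n[1]\ev{\mathbf{x}} - G_k^n[1+2^r]\ev{\mathbf{x}}\rp = \lb \textup{E}_{r+3}\ev{x_1 - x_{1+2^{r+2}}}\rb^2, $$
as desired.

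The main obstacle I anticipate is the bookkeeping in the $M^n_0$ step — verifying carefully that the links of $x_0\textup{E}_2(x_1 x_2 + x_3 x_4)$ at $1$ and at $1+2^r$ really agree for every $r$ in range, handling separately the small cases $r=0,1$ (where $1+2^r\in\{2,3\}$ actually meets the support of $M^n_0$) versus $r\geq 2$ (where, by Remark \ref{rmk:sigma_exceeds_threshold}, $\textup{E}_2$ cannot distinguish $x_{1+2^r}$ from $x_1$ and the relevant neighborhoods are governed purely by $\textup{E}_2$-invariance). Everything else is a direct appeal to Lemma \ref{lem:induction_neigh} plus linearity of the link operator $\mathcal{H}\mapsto \partial\mathcal{H}/\partial x_i$ over the edge decomposition, so no further heavy computation is needed.
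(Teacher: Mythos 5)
Your proposal is correct and follows essentially the same route as the paper: split off $M^n_0$, show its link difference at $1$ and $1+2^r$ vanishes (using $\sigma_0$- and $\sigma_1$-invariance for $r=0,1$ and Remark \ref{rmk:sigma_exceeds_threshold} to reduce to those cases for $r\geq 2$), and then sum Lemma \ref{lem:induction_neigh} over $k=2,\ldots,n$, where only $k=n-r$ contributes. The one slip is your mid-proof claim that for $r\geq 1$ the vertex $1+2^r$ avoids the support of $M^n_0$ --- for $r=1$ it does not, since $M^n_0[3]=x_0\,\textup{E}_2(x_4)\neq 0$ --- but as your final paragraph anticipates, the needed identity $\textup{E}_2(x_2)=\textup{E}_2(x_4)$ follows from $\sigma_1(\mathbf{x})=\mathbf{x}$, which is exactly how the paper closes that case.
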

\begin{proof}
First, we show that 
$$ M^n_0[1] \ev{ \mathbf{x} } - M^n_0[1+2^r] \ev{ \mathbf{x} } = 0 $$
Recall that
$$ M^n_0 = x_0 \textup{E}_2 \ev{ x_1 x_2 + x_3 x_4}$$
Hence, we have
\begin{align*}
& M^n_0[1] = x_0 \textup{E}_2 \ev{ x_2 } \\ 
& M^n_0[2] = x_0 \textup{E}_2 \ev{ x_1 } \\ 
& M^n_0[3] = x_0 \textup{E}_2 \ev{ x_4 }  
\end{align*}
By Remark \ref{rmk:sigma_exceeds_threshold}, we need only to consider $r = 0,1$: 
\begin{align*}
& M^n_0[1] \ev{ \mathbf{x} } - M^n_0[2] \ev{ \mathbf{x} }  = x_0 ( \textup{E}_2 \ev{ x_2 } - \textup{E}_2 \ev{ x_1 } ) = 0 & \text{ by } \sigma_0 \ev{ \mathbf{x} } = \mathbf{x} \\
& M^n_0[1] \ev{ \mathbf{x} } - M^n_0[3] \ev{ \mathbf{x} }  = x_0 ( \textup{E}_2 \ev{ x_2 } - \textup{E}_2 \ev{ x_4 } ) = 0 & \text{ by } \sigma_1 \ev{ \mathbf{x} } = \mathbf{x} 
\end{align*}
which shows that $M^n_0[1]\ev{ \mathbf{x} } - M^n_0[1+2^r]\ev{ \mathbf{x} } = 0 $. Therefore,
\begin{align*}
& X^n[1] \ev{ \mathbf{x} } - X^n[1+2^r] \ev{ \mathbf{x} } =  M^n_0[1]  \ev{ \mathbf{x} } - M^n_0[1+2^r] \ev{ \mathbf{x} } + \sum_{k = 2}^{n} G_k^{n}[1]  \ev{ \mathbf{x} } - G_k^{n}[1+2^r]  \ev{  \mathbf{x} }  \\ 
& = 0 + G_{n-r}^{n}[1] \ev{ \mathbf{x} } - G_{n-r}^{n}[1+2^r] \ev{  \sigma_{r}\ev{\mathbf{x}} }  = \lb \textup{E}_{r+3} \ev{  x_1 - x_{ 1 + 2^{r+2} } } \rb^2 & \hspace{-4cm}\text{by Lemma \ref{lem:induction_neigh}. } 
\end{align*}

\end{proof}

In \cite{kocay}, the degree of each vertex in $\Gamma_n$ is calculated: 
\begin{lemma}
\label{lem:kocay_degrees}
$$ \textup{deg}_{\Gamma_n} \ev{ i } = \begin{cases} 2^{2n-3}-1 & \text{if } 1\leq i \leq 2^{n-2} \text{ or } 1 + 3\cdot 2^{n-2} \leq i \leq 2^{n} \\ 2^{2n-3} & \text{ otherwise } \end{cases}$$
\end{lemma}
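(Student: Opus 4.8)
The plan is to compute $\deg_{\Gamma_n}(i)=\sum_{k=2}^{n}\deg_{G_k^n}(i)$ by exploiting the block/blow-up structure of each $G_k^n$ recorded in Remark \ref{rmk:rec_defn}. The basic observation is that, for any $\ell\ge 0$, the family $\{p_{\bm{\epsilon}}\}_{\bm{\epsilon}\in\{0,1\}^{\ell}}$ consists of injections whose images are pairwise disjoint and together partition $V_n$: by Remark \ref{rmk:repeated_app}, $p_{\bm{\epsilon}}(i)\equiv 2^{\ell}i-\sum_{j}\epsilon_j 2^{j}\pmod{2^n}$, so $p_{\bm{\epsilon}}$ is injective on $V_{n-\ell}$ and its image is exactly one residue class modulo $2^{\ell}$, the class determined by $\bm{\epsilon}$. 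Consequently, for $3\le k\le n$ the edge set of $G_k^n=\sum_{\bm{\epsilon}\in\{0,1\}^{n-k}}p_{\bm{\epsilon}}(G_k^k)$ is a disjoint union of $2^{n-k}$ relabeled copies of $G_k^k$, and the edge set of $G_2^n=\sum_{1\le i\le 4}\sum_{\bm{\epsilon}\in\{0,1\}^{n-3}}p_{\bm{\epsilon}}(x_i x_{i+2}x_{i+4})$ is a disjoint union of $2^{n-3}$ relabeled copies of $G_2^3$. Hence $\deg_{G_k^n}(v)=\deg_{G_k^k}(j)$ for $k\ge 3$ and $\deg_{G_2^n}(v)=\deg_{G_2^3}(j)$ for $k=2$, where $j$ is the unique $p_{\bm{\epsilon}}$-preimage of $v$; only for $k=2$ does the identity of $j$ matter, and there $j=\lceil v/2^{n-3}\rceil\in V_3$.

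Next I would show $\deg_{G_k^k}(j)=6\cdot 4^{k-3}$ for every $j\in V_k$ and every $3\le k\le n$. For the base graph $G_3^3=C^3+D^3$: the tight cycle $C^3$ is vertex-transitive with every vertex of degree $3$; $D^3=\tau(C^3)$ is its image under a permutation, hence isomorphic to $C^3$ and again $3$-regular; and $C^3,D^3$ are edge-disjoint because a set of three consecutive residues modulo $8$ is never an arithmetic progression of common difference $3$ (as $\gcd(3,8)=1$). Thus $\deg_{G_3^3}(j)=6$ for all $j$. For general $k$, recall from Definition \ref{def:cycles} that $G_k^k=\textup{E}_3^k(G_3^3)$ with $\textup{E}_3^k(x_a)=\sum_{j\equiv a\,(\mathrm{mod}\ 8)}x_j$, so $\textup{E}_3^k$ is the balanced blow-up replacing each residue class modulo $8$ by $2^{k-3}$ vertices. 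The decisive point is that every edge of $G_3^3$ meets three \emph{distinct} residue classes modulo $8$, so an edge $\{a,b,c\}$ of $G_3^3$ expands to exactly $(2^{k-3})^3$ edges of $G_k^k$, and a vertex $j\equiv a$ lies in $\deg_{G_3^3}(a)\cdot(2^{k-3})^2=6\cdot 4^{k-3}$ of them. This value does not depend on $j$, so $\deg_{G_k^n}(v)=6\cdot 4^{k-3}$ for all $v$ and all $3\le k\le n$.

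For $k=2$ the four edges of $G_2^3$ are $\{1,3,5\},\{2,4,6\},\{3,5,7\},\{4,6,8\}$, whence $\deg_{G_2^3}(j)=1$ for $j\in\{1,2,7,8\}$ and $\deg_{G_2^3}(j)=2$ for $j\in\{3,4,5,6\}$. Since $j=\lceil v/2^{n-3}\rceil$, one has $j\in\{1,2\}\iff 1\le v\le 2^{n-2}$ and $j\in\{7,8\}\iff 6\cdot 2^{n-3}<v\le 2^n$, i.e. $1+3\cdot 2^{n-2}\le v\le 2^n$; otherwise $\deg_{G_2^n}(v)=2$. Summing, $\sum_{k=3}^{n}6\cdot 4^{k-3}=2(4^{n-2}-1)=2^{2n-3}-2$, so $\deg_{\Gamma_n}(v)=\deg_{G_2^n}(v)+2^{2n-3}-2$, which equals $2^{2n-3}-1$ when $v\in[1,2^{n-2}]\cup[1+3\cdot 2^{n-2},2^n]$ and $2^{2n-3}$ otherwise — the claimed formula. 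The steps I expect to demand the most care are (i) the balanced-blow-up count for $\textup{E}_3^k$, which rests on each edge of $G_3^3$ hitting three distinct residues modulo $8$ (ruling out collapsed edges and spurious multiplicities) together with the edge-disjointness of $C^3$ and $D^3$, and (ii) the passage from ``$j\in\{1,2,7,8\}$'' to the stated $v$-intervals; the rest is bookkeeping on top of the partition-of-$V_n$ observation.
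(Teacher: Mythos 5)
Your argument is correct, but note that the paper does not prove this lemma at all: it is imported from Kocay's paper (the text preceding the statement says the degrees are ``calculated in \cite{kocay}''), so what you have written is a self-contained verification rather than a variant of the paper's proof. The structure you use is the natural one and all the delicate points check out: the maps $\{p_{\bm{\epsilon}}\}_{\bm{\epsilon}\in\{0,1\}^{n-k}}$ are injective with images the $2^{n-k}$ residue classes modulo $2^{n-k}$, so each $G_k^n$ really is a disjoint union of relabeled copies of $G_k^k$; the blow-up $\textup{E}_3^k$ multiplies degrees by $(2^{k-3})^2$ precisely because every edge of $G_3^3$ meets three distinct residues modulo $8$ (so no monomial collapses); $C^3$ and $D^3$ are edge-disjoint since the cyclic-distance multiset of a consecutive triple is $\{1,1,2\}$ while that of $\{i-3,i,i+3\}$ is $\{2,3,3\}$, giving $\deg_{G_3^3}\equiv 6$ and hence $\deg_{G_k^k}\equiv 6\cdot 4^{k-3}$; and the only non-regular layer is $G_2^n$, where your identification $j=\lceil v/2^{n-3}\rceil$ (valid because $p_{\bm{\epsilon}}(j)=2^{n-3}j-c$ with $0\le c\le 2^{n-3}-1$, so the image of $j$ is the interval $((j-1)2^{n-3},\,j\cdot 2^{n-3}]$ with no wraparound) converts $j\in\{1,2,7,8\}$ into exactly the stated intervals $[1,2^{n-2}]\cup[1+3\cdot 2^{n-2},2^n]$. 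The geometric sum $\sum_{k=3}^{n}6\cdot 4^{k-3}=2^{2n-3}-2$ then yields the claimed formula, and the case $n=3$ confirms it ($7$ on $\{1,2,7,8\}$, $8$ elsewhere). What your approach buys is independence from \cite{kocay} for this particular fact; the cost is only the bookkeeping you have already done.
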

In particular, we note that $\Gamma_n$ is not regular.

\begin{defn}
Let $\sim$ denote the projective equivalence relation, i.e., two vectors satisfy $\mathbf{x} \sim \mathbf{y}$ if and only if $\mathbf{x} = a \mathbf{y}$ for some $a\in \R\setminus\{0\}$. 
\end{defn}

The following lemma may have independent interest, and is therefore stated for hypergraphs of any rank.

\begin{lemma}
\label{lem:regular_extended}
Let $\mathcal{G} = \ev{[n],E\ev{\mathcal{G}} } $ be a hypergraph, of rank $m\geq 2$. Let $\mathcal{H} = \ev{[n] \cup \{0\}, E\ev{\mathcal{H}} }$ be obtained from $\mathcal{G}$ by appending a vertex $0 \notin [n]$ with $\textup{codeg}_\mathcal{H}\ev{0,i} = \textup{codeg}_\mathcal{H}\ev{0,j}$, for each $i,j\in [n]$. Let $\ev{\lambda_\mathcal{G},\mathbf{v}}$ and $\ev{\lambda_\mathcal{H}, \mathbf{w}}$ be the principal eigenpairs of $\mathcal{G}$ and $\mathcal{H}$, respectively, where $\mathbf{w} =  \ev{ w_0,w_1,\ldots,w_n }$. Then, the following are equivalent:
\begin{enumerate}
\item[i.] $\mathcal{G}$ is regular. 
\item[ii.] $\mathbf{v} \sim \mathbbm{1} $. 
\item[iii.] $\mathbf{w} \sim \widetilde{\mathbf{w}} := \ev{ \widetilde{w}_0, 1,\ldots, 1 } \text{ for some $\widetilde{w}_0 \in \R\setminus\{0\}$ }$.
\end{enumerate}
\end{lemma}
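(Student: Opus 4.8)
The plan is to prove the chain of equivalences (i) $\Rightarrow$ (ii) $\Rightarrow$ (iii) $\Rightarrow$ (i), using the variational characterization of the principal eigenvalue from Remark \ref{rmk:past_theorems} together with the KKT/Lagrange-multiplier conditions defining the eigenvector equation. First I would establish the implication (i) $\Rightarrow$ (ii): if $\mathcal{G}$ is $d$-regular, then the all-ones vector $\mathbbm{1}$ satisfies the tensor eigenvalue equation $\sum_{i_2,\ldots,i_m} a_{j i_2 \ldots i_m} = d/(m-1)!$ for every $j$, so $(\lambda, \mathbbm{1}/n^{1/m})$ is an eigenpair with positive eigenvalue and strictly positive eigenvector; by the Perron–Frobenius uniqueness in Remark \ref{rmk:past_theorems}(1), $\mathbf{v} \sim \mathbbm{1}$. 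The converse (ii) $\Rightarrow$ (i) is just reading the eigenvector equation at $\mathbf{x} = \mathbbm{1}$: it forces $\mathcal{G}[j](\mathbbm{1}) = \textup{deg}_\mathcal{G}(j)$ to be independent of $j$, i.e. $\mathcal{G}$ is regular. So (i) $\Leftrightarrow$ (ii) is essentially a restatement; I would state it as such.

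The substantive content is the equivalence with (iii). For (ii) $\Rightarrow$ (iii): assume $\mathbf{v} \sim \mathbbm{1}$, so $\mathcal{G}$ is regular with some degree $d$. Write the eigenvector equations for $\mathcal{H}$. Since $0$ has the same codegree $c := \textup{codeg}_\mathcal{H}(0,i)$ with every $i \in [n]$, and since (by hypothesis, $0$ being appended) the link $\mathcal{H}[0]$ is a rank-$(m-1)$ hypergraph on $[n]$ that is "codegree-regular" — I should check whether the hypothesis as stated gives that $\mathcal{H}[0]$ is itself a regular $(m-1)$-uniform hypergraph on $[n]$; the natural reading is that every pair $\{0,i\}$ lies in the same number of edges, which makes $\mathcal{H}[0]$ a $c$-regular $(m-1)$-graph — the equation at vertex $0$ reads $\mathcal{H}[0](\mathbf{x})/(\text{const}) = \lambda_\mathcal{H} x_0^{m-1}$, and the equation at vertex $j \in [n]$ reads $\mathcal{G}[j](\mathbf{x}) + x_0 \cdot \mathcal{H}[0][j](\mathbf{x}) \sim \lambda_\mathcal{H} x_j^{m-1}$ where both $\mathcal{G}[j]$ and $\mathcal{H}[0][j]$ are evaluated on the $[n]$-coordinates. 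Plugging the ansatz $\mathbf{w} = (\widetilde w_0, 1, \ldots, 1)$: the equation at each $j \in [n]$ becomes $d/(m-1)! + \widetilde w_0 \cdot c' = \lambda_\mathcal{H}$ (same for all $j$, using regularity of $\mathcal{G}$ and of the link of $0$), and the equation at $0$ becomes another scalar relation between $\widetilde w_0$ and $\lambda_\mathcal{H}$; this is a consistent finite system with a positive solution $\widetilde w_0 > 0$, hence by Perron–Frobenius uniqueness again, the actual $\mathbf{w}$ must be $\sim (\widetilde w_0, 1, \ldots, 1)$.

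For the remaining implication (iii) $\Rightarrow$ (i) (equivalently $\Rightarrow$ (ii)): assume $\mathbf{w} \sim (\widetilde w_0, 1, \ldots, 1)$. Restrict the eigenvector equations of $\mathcal{H}$ to the vertices $j \in [n]$. Every edge of $\mathcal{H}$ either avoids $0$ (these are exactly the edges of $\mathcal{G}$) or contains $0$. The contribution of the edges through $0$ to the $j$-th equation, evaluated at $\mathbf{w}$, is $\widetilde w_0$ times $\mathcal{H}[0][j](\mathbbm{1}) = \textup{codeg}_\mathcal{H}(0,j)/(m-2)!$, which is independent of $j$ by hypothesis. Since the right-hand side $\lambda_\mathcal{H} w_j^{m-1} = \lambda_\mathcal{H}$ is also independent of $j$, the contribution of the $\mathcal{G}$-edges, namely $\mathcal{G}[j](\mathbbm{1}) = \textup{deg}_\mathcal{G}(j)/(m-1)!$, must be independent of $j$ as well — i.e. $\mathcal{G}$ is regular. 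This closes the cycle.

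The main obstacle I anticipate is pinning down the precise combinatorial meaning of the hypothesis "$\textup{codeg}_\mathcal{H}(0,i) = \textup{codeg}_\mathcal{H}(0,j)$ for all $i,j$" and verifying that it makes the link $\mathcal{H}[0]$ sufficiently homogeneous for the ansatz $(\widetilde w_0, 1, \ldots, 1)$ to actually satisfy all $n$ of the vertex-equations in $[n]$ simultaneously. Constant codegree of $0$ with each vertex controls $\mathcal{H}[0][j](\mathbbm{1})$ but one must make sure no further genuinely $j$-dependent term appears; the key point is that $a_{j i_2 \ldots i_m}$ at $\mathbf{w}$ collapses to a sum of two pieces — the $\mathcal{G}$-piece and the "through-$0$" piece — and both are vertex-independent under our hypotheses. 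I would also be careful that "appending a vertex" means $E(\mathcal{H}) = E(\mathcal{G}) \sqcup \{e : 0 \in e\}$ so that $\mathcal{H} - 0 = \mathcal{G}$, which is what licenses identifying $\mathcal{G}[j]$ with the $0$-avoiding part of $\mathcal{H}[j]$. Everything else is a short computation with the Perron–Frobenius uniqueness doing the heavy lifting in both directions.
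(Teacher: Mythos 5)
Your proposal is correct and follows essentially the same route as the paper: the identity $\mathcal{H}[j](\mathbf{x}) = \mathcal{G}[j](\mathbf{x}) + x_0\,\mathcal{H}[0,j](\mathbf{x})$, the constant-codegree hypothesis to make the through-$0$ contribution $j$-independent, the ansatz $(\widetilde{w}_0,1,\ldots,1)$ resolved by a single scalar equation at vertex $0$ (the paper invokes the Intermediate Value Theorem here, which you elide but which is the only nontrivial point), and Perron--Frobenius uniqueness to upgrade each constructed eigenpair to \emph{the} principal one. The only quibble is bookkeeping: with the paper's normalization $a_e = 1/(m-1)!$ one has $\mathcal{G}[j](\mathbbm{1}) = \deg_\mathcal{G}(j)$ exactly, not $\deg_\mathcal{G}(j)/(m-1)!$, but this constant is immaterial to the argument.
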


\begin{proof}

\phantom{a}

\begin{enumerate}
\item $(i) \Longrightarrow (ii)$: Follows from \cite[Theorem 3.8]{cd1} and the uniqueness of the principal eigenpair.

$(i) \Longleftarrow (ii)$: Note that $\deg_\mathcal{G}\ev{i} = \mathcal{G}[i] \ev{\mathbbm{1}} = \mathcal{G}[j]\ev{\mathbbm{1}} = \deg_\mathcal{G}\ev{j}$ for each $i,j$.

\item Let $\gamma = \textup{codeg}_\mathcal{H}\ev{0,i} = \mathcal{H}[0,i] \ev{\mathbbm{1}}$ for each $i\in [n]$. 
Consider the link of $i = 1,\ldots, n$ :
\begin{align}
\label{eq:eigenv_equation}
\mathcal{H}[i] \ev{ \mathbf{x} } = \mathcal{G}[i] \ev{ \mathbf{x} } + x_0 \mathcal{H}[0,i] \ev{ \mathbf{x} }
\end{align}
$(iii) \Longrightarrow (ii)$: By assumption, $\widetilde{\mathbf{w}}$ is an eigenvector of $\mathcal{H}$. So, we have
$$ \mathcal{H}[i] \ev{ \widetilde{\mathbf{w}} } = \lambda_\mathcal{H} \cdot 1^{m-1} = \lambda_\mathcal{H}$$
for each $i=1,\ldots,n$. We evaluate Equation \ref{eq:eigenv_equation} at $\widetilde{\mathbf{w}}$ to obtain 
$$
\lambda_\mathcal{H} = \mathcal{H}[i] \ev{ \widetilde{\mathbf{w}} } = \mathcal{G}[i] \ev{ \mathbbm{1}  } + \widetilde{w}_0 \mathcal{H}[0,i] \ev{\mathbbm{1}} = \mathcal{G}[i] \ev{ \mathbbm{1}  } + \widetilde{w}_0 \gamma  
$$
for each $i$. Therefore, $\dfrac{1}{\sqrt[m]{n}} \cdot \mathbbm{1}$ is the principal eigenvector of $\mathcal{G}$.

$(ii) \Longleftarrow (iii)$: By assumption, the following equation is satisfied:
$$ 
\mathcal{G}[i] \ev{ \mathbbm{1} } = \lambda_\mathcal{G} \cdot 1^{m-1} = \lambda_\mathcal{G} \qquad \text{ for each } i = 1,\ldots, n
$$
Consider the vector $\mathbf{u} = \ev{ u,1,\ldots,1 }$. We would like to show that there are some constants $u,\lambda_u\in \R_{>0}$ such that the pair $\ev{\lambda_u,\mathbf{u}}$ satisfies the eigenvalue equations for $\mathcal{H}$. When we evaluate Equation \ref{eq:eigenv_equation} at $\mathbf{u}$, we get 
$$ 
\mathcal{H}[i] \ev{ \mathbf{u} } = \mathcal{G}[i] \ev{ \mathbbm{1} } + u \cdot \textup{codeg}_\mathcal{H}\ev{0,i} = \lambda_\mathcal{G} + u \gamma 
$$
for each $i=1,\ldots,n$. Let $\lambda_u : = \lambda_\mathcal{G} + u \gamma $. In particular, $\lambda_u \cdot 1^{m-1} = \mathcal{H}[i] \ev{ \mathbf{u} }$ for $i=1,\ldots,n$.

Also, we have
$$ \mathcal{H}[0] \ev{ \mathbf{u} } =  \deg_\mathcal{H}\ev{0} $$
which is a constant. It is enough to show that $\mathcal{H}[0] \ev{ \mathbf{u} } = \lambda_u u^{m-1}$ for some $u \in \R_{>0}$. In other words, it is enough to show that the equation 
$$\deg_\mathcal{H}\ev{0} = \ev{ \lambda_\mathcal{G} + u \gamma } \cdot u^{m-1}  =  \lambda_\mathcal{G} u^{m-1} + \gamma u^{m}$$
has a solution, which follows from the Intermediate Value Theorem. 
\end{enumerate}
\end{proof}

\begin{lemma}
\label{lem:main_lemma}
Let $n\geq 3$ be fixed. Let $\mathbf{x}$ be the principal eigenvector of $X^n$. Then, $\textup{E}_2^{n}\ev{ x_1 - x_3 } \neq 0$.
\end{lemma}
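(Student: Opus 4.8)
The plan is to argue by contradiction. Suppose $\textup{E}_2^{n}(x_1-x_3)=0$, where $\mathbf{x}=(x_0,x_1,\dots,x_N)$ is the principal eigenvector of $X^n$. I will show that this forces all the ``cycle'' coordinates to coincide, $x_1=x_2=\dots=x_N$, and then obtain a contradiction with the non-regularity of $\Gamma_n$ (Lemma~\ref{lem:kocay_degrees}) via Lemma~\ref{lem:regular_extended}. Two features of $\mathbf{x}$ will be used repeatedly: since $\theta_n\in\textup{Aut}(X^n)$ we have $\theta_n(\mathbf{x})=\mathbf{x}$ by Lemma~\ref{lem:automorphism_eigenvect}, and since $X^n$ has rank $3$ the link of each vertex evaluated at $\mathbf{x}$ satisfies $X^n[j](\mathbf{x})=\lambda x_j^2$, where $\lambda=X^n(\mathbf{x})$ is the principal eigenvalue. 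I also recall that $f_r^{n}$ is divisible by $\textup{E}_{r+2}^{n}(x_1-x_{1+2^{r+1}})$ (noted after its definition), and that $\sigma_r$ transposes the indices $1$ and $1+2^{r}$; in particular $\sigma_r(\mathbf{x})=\mathbf{x}$ implies $x_1=x_{1+2^r}$, and, since $\sigma_r$ merely permutes coordinates, $\sigma_r(\mathbf{x})\in\textup{Int}(S)$ whenever $\mathbf{x}$ is.

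The core of the argument is the claim: for every $r$ with $0\le r\le n-2$ one has $\sigma_r(\mathbf{x})=\mathbf{x}$ and $\textup{E}_{r+2}^{n}(x_1-x_{1+2^{r+1}})=0$. This is proved by induction on $r$. The base case $r=0$ is the hypothesis ($\textup{E}_2^{n}(x_1-x_3)=0$) together with one application of Lemma~\ref{lem:sigma_general_version} with parameter $0$: by divisibility $f_0^{n}(\mathbf{x})=0$, so $X^n(\sigma_0(\mathbf{x}))=X^n(\mathbf{x})=\lambda$; thus $\sigma_0(\mathbf{x})$ is also a maximizer of $X^n$ on $\textup{Int}(S)$, and Remark~\ref{rmk:past_theorems}(2) gives $\sigma_0(\mathbf{x})=\mathbf{x}$. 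For the inductive step, suppose the claim holds for all indices up to some $r$ with $r\le n-3$. Lemma~\ref{lem:neigh_general_version} with parameter $r$ applies (its hypotheses $\sigma_i(\mathbf{x})=\mathbf{x}$ for $0\le i\le r$ are part of the inductive hypothesis) and yields
\[
X^n[1](\mathbf{x})-X^n[1+2^{r}](\mathbf{x})=\bigl(\textup{E}_{r+3}^{n}(x_1-x_{1+2^{r+2}})\bigr)^2 .
\]
The left-hand side equals $\lambda\bigl(x_1^2-x_{1+2^{r}}^2\bigr)$ by the eigenvalue equation, and this is $0$ since $\sigma_r(\mathbf{x})=\mathbf{x}$ gives $x_1=x_{1+2^r}$. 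Hence $\textup{E}_{r+3}^{n}(x_1-x_{1+2^{r+2}})=0$, so $f_{r+1}^{n}(\mathbf{x})=0$ by divisibility; applying Lemma~\ref{lem:sigma_general_version} with parameter $r+1$ (legitimate because $r+1\le n-2$) gives $X^n(\sigma_{r+1}(\mathbf{x}))=X^n(\mathbf{x})=\lambda$, and uniqueness again forces $\sigma_{r+1}(\mathbf{x})=\mathbf{x}$, completing the induction.

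To conclude, combine $\theta_n(\mathbf{x})=\mathbf{x}$ with $\sigma_0(\mathbf{x})=\dots=\sigma_{n-2}(\mathbf{x})=\mathbf{x}$. Identifying $j\in\{1,\dots,2^n\}$ with the binary expansion of $j-1$, the permutation $\sigma_i$ is the flip of the $i$-th bit and $\theta_n$ is the flip of all $n$ bits, so the group $\langle\theta_n,\sigma_0,\dots,\sigma_{n-2}\rangle$ contains every single-bit flip and hence is transitive on $\{1,\dots,2^n\}$. Therefore $x_1=x_2=\dots=x_N$, so $\mathbf{x}=x_1\cdot(x_0/x_1,\,1,\dots,1)$ with $x_1>0$; that is, condition (iii) of Lemma~\ref{lem:regular_extended} holds for $\mathcal{G}=\Gamma_n$ and $\mathcal{H}=X^n$. (The hypothesis of that lemma, $\textup{codeg}_{X^n}(0,i)=2^{n-2}$ for all $i\in[N]$, is immediate because $\Gamma_n$ has no edge through $0$ and $M^n_0=x_0\bigl(\textup{E}_2(x_1)\textup{E}_2(x_2)+\textup{E}_2(x_3)\textup{E}_2(x_4)\bigr)$.) Then the implication (iii)$\Rightarrow$(i) of Lemma~\ref{lem:regular_extended} says $\Gamma_n$ is regular, contradicting Lemma~\ref{lem:kocay_degrees}. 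Hence $\textup{E}_2^{n}(x_1-x_3)\ne 0$.

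The part needing the most care is the bookkeeping of the induction: one must track that $f_r^{n}$ is the polynomial attached to $\sigma_r$ while the neighborhood identity at parameter $r$ produces the vanishing of $\textup{E}_{r+3}^{n}(x_1-x_{1+2^{r+2}})$, the input needed at the \emph{next} stage $r+1$; and one must check that the range restrictions of the two lemmas ($r\le n-2$ in Lemma~\ref{lem:sigma_general_version}, $r\le n-3$ in Lemma~\ref{lem:neigh_general_version}) are exactly enough to reach $\sigma_{n-2}$, with $\theta_n$ supplying the single remaining bit-flip ($\sigma_{n-1}$) required for transitivity.
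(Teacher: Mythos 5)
Your proposal is correct and follows essentially the same route as the paper: the same induction showing $\sigma_r(\mathbf{x})=\mathbf{x}$ for $r=0,\ldots,n-2$ via Lemmas \ref{lem:sigma_general_version} and \ref{lem:neigh_general_version} together with divisibility of $f_r^n$ by $\textup{E}_{r+2}(x_1-x_{1+2^{r+1}})$, followed by the same contradiction with Lemma \ref{lem:kocay_degrees} through Lemma \ref{lem:regular_extended}. The only differences are cosmetic (your inductive claim bundles the vanishing statement with the fixed-point statement, shifting indices by one) plus some welcome extra explicitness about the eigenvalue equation, the transitivity of $\langle\theta_n,\sigma_0,\ldots,\sigma_{n-2}\rangle$, and the constant-codegree hypothesis of Lemma \ref{lem:regular_extended}.
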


\begin{proof}
Suppose, for a contradiction, that $\textup{E}_2\ev{ x_1 - x_3 } = 0$. By induction on $r$, we show that 
\begin{equation}
\label{eq:regularity}
\sigma_{r} \ev{ \mathbf{x} } = \mathbf{x} \qquad \text{ for each $r =  0,\ldots, n-2$.}
\end{equation}
For the base case, let $r = 0$. By Lemma \ref{lem:sigma_general_version}, we have 
$$X^n\ev{ \mathbf{x} } - X^n\ev{ \sigma_{0}\ev{\mathbf{x}} }  = 2 \cdot \textup{E}_{2} \ev{  x_1 - x_{ 3 } } \cdot \textup{E}_{3} \ev{ x_{1} - x_{ 5 } }   \cdot  \textup{E}_{3} \ev{  - x_{ 3  } + x_{ 7 } } = 0$$
By the uniqueness of the principal eigenpair, we obtain $\mathbf{x} =\sigma_0\ev{ \mathbf{x} }$. 

Now, let $1\leq r \leq n-3$ be fixed. By the inductive hypothesis, we have
$$ \sigma_{i} \ev{ \mathbf{x} } = \mathbf{x} $$
for each $i=0,\ldots,r-1$. By Lemma \ref{lem:neigh_general_version}, we have
$$X^n[1] \ev{ \mathbf{x} } - X^n[1+2^{r-1}] \ev{ \mathbf{x} } = \lb \textup{E}_{r+2} \ev{  x_1 - x_{ 1 + 2^{r+1} } } \rb^2    $$
Since $\sigma_{r-1} \ev{ \mathbf{x} } = \mathbf{x}$, it follows that $x_1 = x_{1+2^{r-1}} $ and so, $X^n[1] \ev{ \mathbf{x} } - X^n[1+2^{r-1}] \ev{ \mathbf{x} } = 0$. Hence, we have $\textup{E}_{r+2} \ev{  x_1 - x_{ 1 + 2^{r+1} } } = 0$. 

Using Lemma \ref{lem:sigma_general_version}, we obtain
$$X^n\ev{ \mathbf{x} } - X^n\ev{ \sigma_{r}\ev{ \mathbf{x} } }  = f^{n}_{r} \ev{ \mathbf{x} } $$
As $ f^{n}_{r} \ev{ \mathbf{x} } $ is divisible by $\textup{E}_{r+2} \ev{  x_1 - x_{ 1 + 2^{r+1} } }$, we obtain $X^n\ev{ \mathbf{x} } - X^n\ev{ \sigma_{r}\ev{ \mathbf{x} } } = 0$. By the uniqueness of the principal eigenpair, we conclude $\mathbf{x} =\sigma_r\ev{ \mathbf{x} }$, which finishes the induction. 

Equation \ref{eq:regularity} implies $x_i = x_j$, for each $1\leq i < j \leq 2^{n-1}$. Since $\theta_n\ev{ \mathbf{x} } = \mathbf{x}$, we obtain
$$ x_i = x_j \qquad \text{ for each } 1\leq i < j \leq 2^{n}$$
By Lemma \ref{lem:regular_extended}, we infer that $\Gamma_n$ is a regular hypergraph, which contradicts Lemma \ref{lem:kocay_degrees}.
\end{proof}

We may now prove our main theorem.

\begin{theorem}
The principal eigenvalues of $X^n$ and $Y^n$ are different. 
\end{theorem}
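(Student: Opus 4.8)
The plan is simply to combine the chain of inequalities already assembled just before the theorem statement with the non-vanishing established in Lemma~\ref{lem:main_lemma}. Let $\ev{\lambda,\mathbf{x}}$ and $\ev{\mu,\mathbf{y}}$ be the principal eigenpairs of $X^n$ and $Y^n$. Since $\theta_n \in \textup{Aut}\ev{X^n}$, Lemma~\ref{lem:automorphism_eigenvect} gives $\theta_n\ev{\mathbf{x}} = \mathbf{x}$, so Lemma~\ref{lem:basis_step} applies and yields
$$ Y^n\ev{\mathbf{x}} = X^n\ev{\mathbf{x}} + x_0 \lb \textup{E}_2\ev{x_1 - x_3} \rb^{2} = \lambda + x_0 \lb \textup{E}_2\ev{x_1 - x_3} \rb^{2}. $$
Because $X^n$ (hence $Y^n$) is connected, the hypergraph Perron--Frobenius theorem (Remark~\ref{rmk:past_theorems}) guarantees that $\mathbf{x} \in \textup{Int}\ev{S}$ and that $Y^n$ attains its maximum over $\textup{Int}\ev{S}$, equal to $\mu$, at its own principal eigenvector; in particular $\mu \geq Y^n\ev{\mathbf{x}}$, so $\mu \geq \lambda + x_0 \lb \textup{E}_2\ev{x_1 - x_3} \rb^{2}$.

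It then remains only to see that the extra term is strictly positive. Again by Remark~\ref{rmk:past_theorems}, the principal eigenvector $\mathbf{x}$ of a connected hypergraph has strictly positive real coordinates, so $x_0 > 0$ and $\textup{E}_2\ev{x_1 - x_3}$ is a \emph{real} number. By Lemma~\ref{lem:main_lemma} this number is nonzero, hence $\lb \textup{E}_2\ev{x_1 - x_3} \rb^{2} > 0$, and therefore $x_0 \lb \textup{E}_2\ev{x_1 - x_3} \rb^{2} > 0$. Combining, $\mu > \lambda$, so the principal eigenvalues of $X^n$ and $Y^n$ are distinct, as claimed.

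There is essentially no remaining obstacle at this stage: all of the difficulty has already been absorbed into the preceding lemmas, the decisive one being Lemma~\ref{lem:main_lemma}, which excludes the one degenerate scenario ($\textup{E}_2^n\ev{x_1-x_3}=0$) in which the argument would collapse --- that scenario is ruled out by propagating the symmetries $\sigma_0,\dots,\sigma_{n-2}$ through the principal eigenvector, forcing $\Gamma_n$ to be regular and contradicting Lemma~\ref{lem:kocay_degrees}. Once Lemma~\ref{lem:main_lemma} is in hand, the theorem is immediate from the displayed inequality together with the strict positivity of $x_0$ and of a nonzero real square. Finally, since $X^n$ and $Y^n$ are hypomorphic they share the same polynomial deck, so the inequality $\mu \neq \lambda$ exhibits a pair of $3$-uniform hypergraphs with identical polynomial decks but different characteristic polynomials, disproving the polynomial reconstruction conjecture for rank $3$.
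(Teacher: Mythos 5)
Your proposal is correct and follows essentially the same route as the paper: both compare $\mu = Y^n\ev{\mathbf{y}} \geq Y^n\ev{\mathbf{x}}$ with $Y^n\ev{\mathbf{x}} - X^n\ev{\mathbf{x}} = x_0 \lb \textup{E}_2\ev{x_1-x_3}\rb^2$ via Lemma \ref{lem:basis_step} and invoke Lemma \ref{lem:main_lemma} for strict positivity. Your added remarks on why $\mathbf{x}\in\textup{Int}\ev{S}$ and why the square term is a positive real are accurate elaborations of what the paper leaves implicit.
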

\begin{proof}
Let $\ev{ \lambda, \mathbf{x} }, \ev{ \mu, \mathbf{y}}$ be the principal eigenpairs of $X^n$ and $Y^n$, respectively. 

In particular, we have 
$$ \lambda = X^n \ev{\mathbf{x}}   \quad \text{ and }\quad    \mu = Y^n \ev{\mathbf{y}}  $$
By Lemma \ref{lem:basis_step}, we have
\begin{align*}
& Y^n \ev{ \mathbf{x} } - X^n\ev{ \mathbf{x} } = x_0\lb \textup{E}_2\ev{ x_1 - x_3 } \rb^{2} 
\end{align*}

Therefore,
\begin{align*}
&  \mu - \lambda  =  Y^n \ev{\mathbf{y}} - X^n \ev{\mathbf{x}}   \\
& = \ev{Y^n \ev{\mathbf{y}} - Y^n \ev{\mathbf{x}}} + \ev{Y^n \ev{\mathbf{x}} - X^n \ev{\mathbf{x}}}  \\
& = \ev{Y^n \ev{\mathbf{y}} - Y^n \ev{\mathbf{x}}} + x_0\lb \textup{E}_2\ev{ x_1 - x_3 } \rb^{2} > 0  & \text{ by Lemma \ref{lem:main_lemma}}
\end{align*}
In other words, $\mu > \lambda$.
\end{proof}

\begin{corollary}
The infinite pair of hypergraphs $\{ \ev{X^n, Y^n}\}_{n\geq 3}$ are hypomorphic, but their characteristic polynomials are different. 
\end{corollary}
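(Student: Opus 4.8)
The plan is to read the corollary off the preceding theorem with essentially no new work, the one point needing care being the dictionary between the characteristic polynomial of a hypergraph and its spectrum. That $X^n$ and $Y^n$ are hypomorphic is not something to reprove: it is exactly Kocay's result, recorded in Definition \ref{def:final_defn}, following \cite{kocay}. So the task reduces to showing $\chi_{X^n} \neq \chi_{Y^n}$, and I would obtain this from the strict inequality $\mu > \lambda$ between principal eigenvalues supplied by the theorem just proved.

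To make that deduction, recall that the characteristic polynomial $\chi_{\mathcal{H}}$ of a rank-$m$ hypergraph $\mathcal{H}$ is the resultant $\mathrm{Res}_{\mathbf{x}}\ev{\mathcal{A}_{\mathcal{H}}\,\mathbf{x}^{m-1} - t\,\mathbf{x}^{[m-1]}}$, with $\mathbf{x}^{[m-1]} = \ev{x_1^{m-1},\dots,x_n^{m-1}}$, and that a scalar $t_0$ is a root of $\chi_{\mathcal{H}}$ if and only if $t_0$ is an eigenvalue of $\mathcal{H}$ in the sense of the ``Eigenpairs of tensors'' definition. In particular the spectral radius $\rho\ev{\mathcal{H}} := \max\{\,|t_0| : \chi_{\mathcal{H}}\ev{t_0}=0\,\}$ is a function of $\chi_{\mathcal{H}}$ alone. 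Since $X^n$ and $Y^n$ are connected, the hypergraph Perron--Frobenius theorem of Remark \ref{rmk:past_theorems} applies and identifies the principal eigenvalue of each with its spectral radius, so $\lambda = \rho\ev{X^n}$ and $\mu = \rho\ev{Y^n}$. Were $\chi_{X^n} = \chi_{Y^n}$, the two polynomials would have the same multiset of roots, hence the same spectral radius, forcing $\lambda = \mu$ and contradicting $\mu > \lambda$. Therefore $\chi_{X^n} \neq \chi_{Y^n}$.

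Finally I would close the loop with the reconstruction problem: hypomorphism means the vertex-deleted subhypergraphs of $X^n$ and $Y^n$ are isomorphic in matching pairs, and isomorphic hypergraphs are cospectral because a relabelling of vertices is a permutation of the coordinates, which leaves the resultant — and hence the characteristic polynomial — unchanged; so $X^n$ and $Y^n$ have identical polynomial decks while, by the above, different characteristic polynomials, which is precisely a negative answer to the polynomial reconstruction conjecture for $3$-uniform hypergraphs. There is no real obstacle in any of this: the only things to get right are stating the eigenvalue / characteristic-polynomial correspondence cleanly and invoking connectedness so that Remark \ref{rmk:past_theorems} is available; all the substantive content was already carried by Lemma \ref{lem:main_lemma} and the theorem it feeds.
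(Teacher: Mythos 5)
Your proposal is correct and matches the paper's intent exactly: the paper states this corollary without proof as an immediate consequence of the main theorem, and your argument (hypomorphism from Kocay, plus the fact that the spectral radius is a root of the characteristic polynomial, so $\mu > \lambda$ forces $\chi_{X^n} \neq \chi_{Y^n}$) is precisely the deduction being left implicit. Your explicit appeal to the resultant definition of $\chi_{\mathcal{H}}$ and to the invariance of the characteristic polynomial under isomorphism supplies detail the paper omits, but it is the same route.
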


\section{Acknowledgement}

Thanks to Alexander Farrugia for helpful discussions that motivated much of the present work, and to the anonymous referee for their careful reading and suggestions. SageMath (\cite{sagemath}) was integral to the development of our arguments; the second author maintains a repository of SageMath code (\cite{okur}) that can be used to perform hypergraph calculations, such as the ones that appear above.

\bibliographystyle{plain}
\bibliography{bibliography}

\begin{thebibliography}{10}

\bibitem{clifton}
Alexander Clifton, Xiaonan Liu, Reem Mahmoud, and Abhinav Shantanam.
\newblock Reconstruction and edge reconstruction of triangle-free graphs,
  arXiv.

\bibitem{cd1}
Joshua Cooper and Aaron Dutle.
\newblock Spectra of uniform hypergraphs.
\newblock {\em Linear Algebra and its Applications}, 436(9):3268--3292, 2012.

\bibitem{cvetkovic}
Drago\v{s}~M. Cvetkovi\'{c}, Michael Doob, Ivan Gutman, and Aleksandar
  Torga\v{s}ev.
\newblock {\em Recent results in the theory of graph spectra}, volume~36 of
  {\em Annals of Discrete Mathematics}.
\newblock North-Holland Publishing Co., Amsterdam, 1988.

\bibitem{fa21}
Alexander Farrugia.
\newblock Graphs having most of their eigenvalues shared by a vertex deleted
  subgraph.
\newblock {\em Symmetry}, 13(9), 2021.

\bibitem{friedland}
S.~Friedland, S.~Gaubert, and L.~Han.
\newblock Perron–{F}robenius theorem for nonnegative multilinear forms and
  extensions.
\newblock {\em Linear Algebra and its Applications}, 438(2):738--749, 2013.

\bibitem{kelly}
Paul~Joseph Kelly.
\newblock A congruence theorem for trees.
\newblock {\em Pacific Journal of Mathematics}, 7(1):961--968, 1957.

\bibitem{kocay}
W.L Kocay.
\newblock A family of nonreconstructible hypergraphs.
\newblock {\em Journal of Combinatorial Theory, Series B}, 42(1):46--63, 1987.

\bibitem{okur}
Utku Okur.
\newblock Hypergraph programs.
\newblock \url{ https://github.com/utkuokur/Hypergraph-Programs }, 2023.
\newblock {A}ccessed 2013-12-29.

\bibitem{schwenk}
Allen~J. Schwenk.
\newblock Spectral reconstruction problems.
\newblock {\em Annals of the New York Academy of Sciences}, 328(1):183--189,
  1979.

\bibitem{SciSta23}
Irene Sciriha and Zoran Stanić.
\newblock The polynomial reconstruction problem: The first 50 years.
\newblock {\em Discrete Mathematics}, 346(6):113349, 2023.

\bibitem{sagemath}
{The Sage Developers}.
\newblock {S}agemath, the {S}age {M}athematics {S}oftware {S}ystem ({V}ersion
  10.1).
\newblock \url{ https://www.sagemath.org }, 2023.
\newblock {A}ccessed 2013-12-29.

\bibitem{tutte}
W.~T. Tutte.
\newblock All the king's horses.
\newblock In {\em Graph Theory and Related Topics}, pages 15--33. Academic
  Press, 1979.

\bibitem{ulam}
Stanis{\l}aw~M. Ulam.
\newblock {\em A Collection of Mathematical Problems}, volume~8 of {\em
  Interscience Tracts in Pure and Applied Mathematics}.
\newblock Interscience Publishers, 1960.

\end{thebibliography}

\end{document}